\documentclass[11pt, reqno]{amsart}
\usepackage{latexsym, amsmath, amssymb, amsthm, mathtools}
\usepackage{cases, verbatim}
\usepackage[active]{srcltx}
\usepackage{esint}
\usepackage[colorlinks]{hyperref}
\usepackage{hypernat}
\usepackage{xcolor}
\usepackage{float}
\usepackage[normalem]{ulem}
\usepackage{cancel}
\usepackage{subcaption}
\usepackage[T1]{fontenc}
\usepackage{mathscinet}
\hypersetup{
    unicode=false,          
    pdftoolbar=true,        
    pdfmenubar=true,        
    pdffitwindow=false,     
    colorlinks=false,       
    linkcolor=red,          
    linkbordercolor=red,
    citecolor=green,        
    citebordercolor=green,
    filecolor=magenta,      
    urlcolor=cyan,           
   urlbordercolor={1 1 1}  
}

\usepackage[margin=1.5in]{geometry}

\newtheorem{thm}{Theorem}
\newtheorem{lem}[thm]{Lemma}
\newtheorem{prop}[thm]{Proposition}

\theoremstyle{remark}
\newtheorem{rmk}[thm]{Remark}
\newtheorem{example}[thm]{Example}

\theoremstyle{definition}
\newtheorem{defi}[thm]{Definition}

\numberwithin{thm}{section} 
\numberwithin{equation}{section}

\makeatletter

\newcommand{\Rmnum}[1]{\expandafter\@slowromancap\romannumeral #1@}
\makeatother

\def\Om{\Omega}

\def\R{{\mathbb R}}
\def\L{{\mathbb L}}
\def\X{{\mathbf X}}

\def\U{{\mathcal U}}
\def\S{{\mathcal S}}
\def\Z{{\mathbb Z}}

\newcommand{\Oba}{\overline{\Omega}}

\newcommand{\vep}{\varepsilon}
\newcommand{\ol}{\overline}
\newcommand{\ul}{\underline}

\newcommand{\bpm}{\begin{pmatrix}}
\newcommand{\epm}{\end{pmatrix}}

\newcommand{\beq}{\begin{equation}}
\newcommand{\eeq}{\end{equation}}

\def\lipl{{\rm Lip_{loc}\,}}

plus 6pt minus 12pt
\title[Stability and Perron's method for Monge solutions]{Stability and Perron's method for Monge solutions of eikonal equations in metric spaces}

\author[Q.~Liu]{Qing Liu}
\address[Qing Liu]{Geometric Partial Differential Equations Unit, Okinawa Institute of Science and Technology Graduate University, Okinawa 904-0495, Japan}
\email{qing.liu@oist.jp}

\author[M.~B.~P.~Wiranata]{Made Benny Prasetya Wiranata}
\address[Made Benny Prasetya Wiranata]{Geometric Partial Differential Equations Unit, Okinawa Institute of Science and Technology Graduate University, Okinawa 904-0495, Japan, and \newline
Faculty of Mathematics and Natural Sciences, Universitas Gadjah Mada, Indonesia}
\email{made.wiranata@oist.jp}

\date{\today}

\begin{document}

\begin{abstract}
The main result of this paper concerns the stability of Monge solutions to eikonal equations in metric spaces. Although the subslope used in the standard definition of Monge solutions is, in general, not stable under uniform convergence, we overcome this difficulty by introducing an equivalent nonlocal characterization based on sphere-wise subslope conditions. This approach complements existing methods based on metric viscosity solutions and enables us to establish stability directly under uniform convergence and Gromov-Hausdorff perturbations of the domain and metric, without the use of test functions. The method extends to inhomogeneous equations via the optical metric and to a broader class of Hamilton-Jacobi equations. Our stability argument further enables us to develop Perron’s method for Monge solutions in general complete length spaces. This yields a new Perron-type existence result for Hamilton-Jacobi equations in metric spaces, without relying on test functions. 
\end{abstract}

\subjclass[2020]{35B35, 35R15, 35F21}
\keywords{stability, eikonal equation, metric spaces, Monge solutions, viscosity solutions, Perron's method}

\maketitle


\section{Introduction}

\subsection{Background and motivation}
Hamilton-Jacobi equations on metric spaces arise naturally in optimal control, geometric optics, and the analysis of singular geometric structures. Among them, the eikonal equation
\begin{equation}\label{eikonal}
    |\nabla u|=f\quad\text{in }\Omega,
\end{equation}
plays a fundamental role. Here, $(\X,d)$ is a complete length space, $\Omega\subsetneq\X$ is a domain, and $f:\Omega\to(0,\infty)$ is a given continuous function. Since a general metric space lacks a differentiable structure, the meaning of $|\nabla u|$ must be interpreted appropriately in order to formulate well-posed boundary value problems.

Several notions of weak solutions have been proposed for Hamilton-Jacobi equations in metric spaces. Giga, Hamamuki, and Nakayasu~\cite{GHN} introduced a notion of metric viscosity solutions by reducing the equation to one-dimensional problems along arc-length parametrized curves. This approach was extended to the time-dependent setting in \cite{Na1} and applied to eikonal equations on the Sierpi\'nski gasket in \cite{CCM}. A different viscosity framework was developed by Ambrosio and Feng~\cite{AF} and by Gangbo and {\'S}wi{\polhk{e}}ch~\cite{GaS2,GaS}, extending the classical Euclidean definition through suitable classes of test functions. A third approach, called {Monge solutions}, was introduced by Liu, Shanmugalingam, and Zhou~\cite{LShZ}, where it was shown that, for the eikonal equation \eqref{eikonal} on complete length spaces, the Monge formulation is equivalent to the two viscosity notions mentioned above. However, unlike viscosity solutions, Monge solutions are defined directly through the metric geometry and do not rely on test functions. More precisely, the metric gradient $|\nabla u|$ is interpreted as the local subslope
\begin{equation}\label{sub_slope}
    |\nabla^-u|(x)=\limsup_{\Omega\ni y\to x}\frac{(u(x)-u(y))_+}{d(x,y)},
\end{equation}
where $(a)_+=\max\{a,0\}$. The equation \eqref{eikonal} is then understood simply as
\[
|\nabla^-u|(x)=f(x)
\qquad\text{for every }x\in\Omega.
\]
This formulation provides a remarkably simple and geometric description of solutions while remaining equivalent to the existing viscosity theory. Applications of the local subslope to metric determination have recently been developed in \cite{DaLeSa,DaSa}.

The notion of Monge solutions was proposed by Newcomb and Su~\cite{NeSu} in the Euclidean setting as an alternative approach to Hamilton-Jacobi equations, particularly suitable for discontinuous Hamiltonians. A similar idea of characterizing viscosity solutions pointwise without using test functions appeared earlier in the work of Lions and Souganidis~\cite{LiS}. We refer to
\cite{NeSu,So,CaSi1,BrDa}
for related developments in Euclidean spaces, to \cite{EGV} for recent work in sub-Riemannian geometry, to \cite{LW} for time-dependent Hamilton-Jacobi equations in metric spaces,
and to \cite{LMit1} for applications to the eigenvalue problem of the infinity Laplacian.

Despite the simple formulation of Monge solutions, their stability theory is more delicate. Suppose that $f_j\to f$ uniformly in $\Omega$ and that $u_j$ is a Monge solution of
\[
|\nabla u_j|=f_j
\quad\text{in }\Omega.
\]
Even if $u_j$ converges uniformly to a function $u$, it is not immediate that $u$ remains a Monge solution of
\eqref{eikonal}. The main difficulty is that the local subslope \eqref{sub_slope} is generally not stable under uniform perturbations of the function. On the other hand, for the approach of Giga, Hamamuki, and Nakayasu, we refer to \cite[Section 5]{GHN} for stability results concerning their notion of solutions. More recently, stability for the solutions introduced in \cite{GaS2, GaS} has been established by Nakayasu and Namba~\cite{NN}, Makida~\cite{M}, and Makida and Nakayasu~\cite{MN} in compact or proper geodesic spaces. These results can be viewed as a generalization of the classical stability theory for viscosity solutions (see, e.g., \cite{CIL, BCBook}). By the equivalence among these notions of solutions established in \cite{LShZ}, one may further obtain stability for Monge solutions of eikonal equations. However, it is natural to ask whether a direct stability argument for Monge solutions is possible. 

\subsection{Key observation and main result}
The main purpose of this paper is to address this stability question based on a characterization of Monge solutions by a nonlocal slope. We restrict ourselves to stationary equations in this work, but a similar approach can be applied to the time-dependent case, based on the notion of Monge solutions introduced in \cite{LW}. Let us begin with the standard eikonal equation
\begin{equation}\label{eikonal1}
    |\nabla u|=1\quad\text{in }\Omega,
\end{equation}
for which the Monge condition simply takes the form
\begin{equation}\label{1-slope1}
    |\nabla^-u|(x)=1\quad \text{for all $x\in\Omega$.}
\end{equation}
Motivated by the dynamic programming principle associated to its control-theoretic interpretation, we introduce, for every radius $r>0$ such that $\overline{B_r(x)}\subset\Omega$,
\begin{equation}\label{r-subslope}
    S_r^-u(x)=\sup_{y\in\partial B_r(x)}
    \frac{(u(x)-u(y))_+}{r},
\end{equation}
Here, we denote ${B_r(x)}=\{y\in \X: d(x, y)< r\}$ 
and use $\ol{B_r(x)}$ and $\partial B_r(x)$ to denote the corresponding closed ball and sphere in $\X$. An important observation, at least in the Euclidean case, is that \eqref{1-slope1} is equivalent to 
\begin{equation}\label{1-slope2}
    S_r^-u(x)=1\quad
    \text{for every $r>0$ such that $\overline{B_r(x)}\subset\Omega$.}
\end{equation}
This characterization of Monge solutions is closely related to the equivalence result in \cite[Theorem~1.2]{LShZ} and, more recently, \cite[Lemma~3.5]{JC} in general complete length spaces. We provide a more complete characterization in Theorem~\ref{thm:char} concerning not only Monge solutions but also Monge sub- and supersolutions. This result  plays a central role in our analysis. 

The advantage of using \eqref{1-slope2} lies in its natural stability under uniform convergence. Indeed, for every fixed $x$ and $r$, the quantity $S_r^-u(x)$ depends only on the values of $u$ on the sphere $\partial B_r(x)$ and therefore passes directly to uniform limits. As a consequence, the stability of Monge solutions can be established without appealing to any equivalent viscosity formulation. The same characterization also provides the foundation for other applications developed in this paper, including stability under perturbations of the underlying metric and domain, a homogenization example for eikonal equations on periodically weighted lattice graphs, and Perron's method for Monge solutions.

Our approach is inspired by the stability argument developed in \cite[Section 6]{BrDa} for Hamilton-Jacobi equations in Euclidean spaces, which relies on optical length functions arising from the optimal control interpretation. It essentially builds upon the well-known equivalence between viscosity solutions and the dynamic programming principle established by Lions and Souganidis~\cite{LiS}; see also \cite[Theorem 2.32]{BCBook}. Our main contribution in this work is to extend this perspective to general geometric settings. In particular, we establish a local stability theory that allows the underlying domains themselves to vary, rather than only the Hamiltonian or the ambient metric. Stability problems of this type have already been considered in Makida~\cite{M} and Makida and Nakayasu~\cite{MN} using the approach of \cite{GaS2, GaS}. In addition to the difference in the notion of solutions, in this work, we improve their results by removing unnecessary compactness or properness assumptions on the ambient and approximating spaces. 

Moreover, a key distinction from \cite{M, MN} is that we allow perturbations of the domain $\Omega$, rather than the entire space $\X$. As noted in \cite[Remark~5.10]{LN} and \cite[Section~4.2]{LShZ2}, Hamilton-Jacobi equations posed on a complete metric space $\X$ can often be interpreted as a state-constraint or generalized Dirichlet boundary condition in the Euclidean setting, which may essentially restrict the class of solutions. Let us explain this point through the following simple example. Consider the eikonal equation $|\nabla u|=1$ in $\X=[0, 1]\subset \R$, which is clearly a complete metric space under the standard Euclidean metric. It is easily seen that $u(x)=x$ is a viscosity (or Monge) solution in the interior $(0, 1)$ but not on the whole space $[0, 1]$, since the solution property fails at $x=0$. In this work, we aim to develop a local stability theory for the equation that is essentially free from boundary effects, potentially allowing for a broader range of applications. 

Our first main stability result is Theorem~\ref{thm:dom-stability} below. In the sequel, we denote by $d_H(A, B)$  an equivalent form of the Hausdorff distance of sets $A, B\subset (\X, d)$, defined by
\[
d_H(A, B)=\sup_{x\in A} \inf_{y\in B} d(x, y)+ \sup_{y\in B}\inf_{x\in A} d(x, y). 
\]
Moreover, for a general metric $L$, we use $|\nabla_L f|$ to denote the slope of $f$ with respect to $L$. 

\begin{thm}[Stability of Monge solutions]\label{thm:dom-stability}
    Let $(\X,d)$ be a metric space, and let $(\X_\vep,d_\vep)\subset(\X,d)$ be complete length spaces for $\vep\ge0$. Assume that $(\X_0,d_0)$ is proper and 
    \begin{equation}\label{compat-metric}
        d_0(x_j,x)\to0\quad \text{implies} \quad  d(x_j,x)\to0 \quad \text{for $x_j, x\in\X_0$.}
    \end{equation}
    Let $\Omega_\vep\subsetneq\X_\vep$ be domains in $(\X_\vep, d_\vep)$ for $\vep\geq 0$ satisfying the following conditions. 
    \begin{enumerate}
        \item[(A1)] If $x_\vep, y_\vep\in \Omega_\vep$, $x_0, y_0\in \Omega_0$  satisfy
        \begin{equation}\label{dom-convergence1}
            d(x_\vep, x_0)\to 0,\ d(y_\vep, y_0)\to 0 \quad \text{as $\vep\to 0$,}
        \end{equation}
        then
        \begin{equation}\label{dom-convergence2}
            d_\vep(x_\vep, y_\vep)\to d_0(x_0, y_0) \quad \text{as $\vep\to 0$.}
        \end{equation}
        \item[(A2)] For any $x_0\in \Omega_0$ and $r_0>0$ small satisfying $V_0:=\{x\in \X_0: d_0(x, x_0)\leq r_0\}\subset \Omega_0$, there exist $x_\vep\in \Omega_\vep$, $r_\vep>0$ such that 
        \begin{equation}\label{dom-ball}
            V_\vep:=\{x\in \X_\vep: d_\vep(x, x_\vep)\leq r_\vep\}\subset \Omega_\vep,
        \end{equation}
        and $r_\vep\to r_0$, $d(x_\vep, x_0)\to 0$, $d_H(V_\vep, V_0)\to 0$ as $\vep\to 0$. 
    \end{enumerate}
    Let $u_\vep\in \lipl(\Omega_\vep)$ be a Monge solution of $|\nabla_{d_\vep} u_\vep|=1$ in $\Omega_\vep$ for each $\vep>0$. Assume that $u_\vep\to u$ as $\vep\to 0$ in the sense that 
    \begin{equation}\label{unif-converge}
        |u_\vep(x_\vep)- u(x_0)|\to 0 \quad \text{whenever $x_0\in \Omega_0$, $x_\vep\in \Omega_\vep$ and $d(x_\vep, x_0)\to 0$.}
    \end{equation}
    Then $u$ is a Monge solution of $|\nabla_{d_0} u|=1$ in $\Omega_0$. 
\end{thm}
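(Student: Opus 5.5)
The plan is to avoid the local subslope altogether and work with the sphere-wise characterization of Theorem~\ref{thm:char}. In a complete length space, a function $v$ is a Monge solution of $|\nabla v|=1$ in a domain exactly when $S_r^-v(x)=1$ for all $x$ and all $r>0$ with closed ball inside the domain. Equivalently, for a subsolution $v(x)-v(y)\le r$ on spheres, and for a supersolution there is a point on the sphere with $v(x)-v(y)\ge r$.

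I would use Theorem~\ref{thm:char} in a localized form. That is, it suffices to check these conditions for small radii $r\le r_0(x)$. The subsolution property is local, since it amounts to $1$-Lipschitz bounds in $d$ on small balls together with the length structure. The supersolution property I expect the characterization to give locally as well. Once this is in hand, I fix $x_0\in\Omega_0$ and $r_0$ small with $V_0\subset\Omega_0$, and take $x_\vep$, $r_\vep$, $V_\vep$ from (A2). Applying Theorem~\ref{thm:char} to $u_\vep$ in $\Omega_\vep$ with the ball $V_\vep$ gives two things:
\[
u_\vep(x_\vep)-u_\vep(y)\le d_\vep(x_\vep,y)\ \text{ for } y\in V_\vep, \qquad u_\vep(x_\vep)-u_\vep(y_\vep)\ge r_\vep \ \text{ for some } y_\vep\in\partial_{d_\vep}B_{r_\vep}(x_\vep).
\]

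\textbf{Subsolution part.} Take $y_0\in V_0$. Since $d_H(V_\vep,V_0)\to0$, I can pick $y_\vep\in V_\vep$ with $d(y_\vep,y_0)\to0$. By (A1), $d_\vep(x_\vep,y_\vep)\to d_0(x_0,y_0)$, and by \eqref{unif-converge}, $u_\vep(x_\vep)\to u(x_0)$ and $u_\vep(y_\vep)\to u(y_0)$. Passing to the limit gives $u(x_0)-u(y_0)\le d_0(x_0,y_0)$ for every $y_0\in V_0$. Hence $S_r^-u(x_0)\le 1$ for all $r\le r_0$, and $u$ is locally $1$-Lipschitz with respect to $d_0$, so $|\nabla^-_{d_0}u|\le 1$.

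\textbf{Supersolution part.} This is where compactness is needed, and I expect it to be the main obstacle. The points $y_\vep$ from the display above lie in $V_\vep$, so there are $z_\vep\in V_0$ with $d(y_\vep,z_\vep)\to 0$. Because $\X_0$ is proper, $V_0$ is $d_0$-compact, so along a subsequence $z_\vep\to z_0$ in $d_0$. By \eqref{compat-metric} this gives $d(z_\vep,z_0)\to0$, and hence $d(y_\vep,z_0)\to0$ with $z_0\in V_0\subset\Omega_0$.

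Two convergences now pass to the limit. First, (A1) gives $d_\vep(x_\vep,y_\vep)\to d_0(x_0,z_0)$, and the left side equals $r_\vep\to r_0$, so $d_0(x_0,z_0)=r_0$ and $z_0\in\partial B_{r_0}(x_0)$. Second, \eqref{unif-converge} gives $u(x_0)-u(z_0)\ge r_0$. Hence $S_{r_0}^-u(x_0)\ge1$.

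Since $r_0$ is an arbitrary small admissible radius, combining with the subsolution part gives $S_r^-u(x_0)=1$ for all small $r$. Theorem~\ref{thm:char}, applied in $(\X_0,d_0)$, then yields that $u$ is a Monge solution in $\Omega_0$.

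The delicate points are the following:
\begin{itemize}
\item Compactness is available only in $\X_0$, which is why the near points $z_\vep$ are selected in $V_0$ via the Hausdorff convergence rather than by extracting a limit of $y_\vep$ directly.
\item The chosen spheres must be nonempty; this holds because $\Omega_\vep\subsetneq\X_\vep$ is a domain in a length space.
\item The localized form of Theorem~\ref{thm:char}, with radii below a threshold, must be justified. If it is not directly available, I would first derive $|\nabla^-u|(x_0)\ge 1$ from $S_r^-u(x_0)\ge1$ for all small $r$ together with the local Lipschitz bound, by a chaining argument along near-optimal points as in the proof of that theorem.
\end{itemize}
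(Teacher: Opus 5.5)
Your argument follows the paper's proof almost step by step. You apply Theorem~\ref{thm:char} to $u_\vep$ on the (A2)-balls $V_\vep$, move points into $V_0$ using $d_H(V_\vep,V_0)\to0$, extract a $d_0$-convergent subsequence by properness of $\X_0$, pass to $d$-convergence via \eqref{compat-metric}, and use (A1) to place the limit point on the $d_0$-sphere.

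One step, however, is false as stated. In the supersolution part you choose $y_\vep$ on the $d_\vep$-sphere of radius $r_\vep$ about $x_\vep$ with $u_\vep(x_\vep)-u_\vep(y_\vep)\ge r_\vep$. That assumes the supremum defining $S^-_{r_\vep}u_\vep(x_\vep)$ is attained. Theorem~\ref{thm:char} only says this supremum equals $1$. Since the $\X_\vep$ are only complete length spaces (not proper, not geodesic), spheres need not be compact and the supremum need not be attained.

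For a concrete failure, let $\X$ be two points $a,b$ joined by segments of lengths $1+1/n$, $n\in\mathbb N$. Then $u=d(\cdot,b)$ is a Monge solution of $|\nabla u|=1$ in $\X\setminus\{b\}$ and $S^-_{1/2}u(a)=1$. Yet every point $y$ at distance $1/2$ from $a$ satisfies $u(a)-u(y)=1/2-1/n<1/2$ for the corresponding $n$.

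The repair is the one the paper uses. Fix $\sigma>0$ and pick $y_\vep$ on the sphere with $u_\vep(x_\vep)-u_\vep(y_\vep)\ge(1-\sigma)r_\vep$. Run your compactness argument to get $z_0$ with $d_0(x_0,z_0)=r_0$ and $u(x_0)-u(z_0)\ge(1-\sigma)r_0$. Only then let $\sigma\to0$ to obtain $S^-_{r_0}u(x_0)\ge1$.

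Your worry about needing a localized version of Theorem~\ref{thm:char} is not a real issue. The ``$\Leftarrow$'' directions in its proof only use radii tending to $0$, so you can apply the theorem directly with small radii and no chaining argument is needed.
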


The assumption (A1) is a natural condition for the stability result. Indeed, for any $\vep\geq 0$ and $y_\vep\in \X_\vep$, $d_\vep(\cdot, y_\vep)$ is a Monge solution of $|\nabla u_\vep|=1$ in $\Omega_\vep\setminus \{y_\vep\}$, and therefore the convergence \eqref{dom-convergence2} under the condition \eqref{dom-convergence1} is precisely the stability property we aim to show. An assumption similar to (A1) is also imposed in \cite{MN} for a different approach to stability of more general Hamilton-Jacobi equations. 

The condition (A2) is new and proves to be necessary as well to ensure stability under domain perturbations. A simple example illustrating instability in the absence of (A2) is as follows.
\begin{example}
Let $\X=\X_\vep=\R^2$ with $d, d_\vep$ given by the Euclidean metric for all $\vep\geq 0$. Let $\Omega_\vep=B_1(0)\setminus \ol{B_\vep(0)}$. It is clear that $\Omega_\vep\to \Omega_0=B_1(0)$ as $\vep\to 0$ in the Hausdorff sense. However, while $u_\vep(x)=|x|$ is a Monge solution of $|\nabla u_\vep|=1$ in $\Omega_\vep$ for all $\vep>0$, its uniform limit $u_0(x)=|x|$ (for $x\in \Omega_0$) is not a Monge solution of the equation in $\Omega_0$, since the subslope associated to $d_0$ at $x=0$ is  $|\nabla^- u_0|(0)=0\neq 1$. In this case, any closed ball $\ol{B_r(0)}$ with $r\in (0, 1)$ contained in $\Omega_0$ cannot be approximated from inside by closed balls in $\Omega_\vep$, as required by \eqref{dom-ball} in (A2). On the other hand, if the Hausdorff limit of $\Omega_\vep$ is interpreted as $\Omega_0=B_1(0)\setminus \{0\}$, then (A2) is satisfied, and the stability result holds. 
\end{example}

As mentioned earlier, Theorem~\ref{thm:dom-stability} improves the results in \cite{M, MN}, as it no longer requires properness of the ambient space $(\X,d)$ or any of the approximating spaces $(\X_\vep,d_\vep)$. The only compactness needed in the supersolution argument is on the limit space $\X_0$. In general, removing all properness assumptions on the spaces may lead to instability of Monge solutions, as shown by the following example.

\begin{example}\label{ex:L2_instability}
Let us consider a fixed metric space $\X=\ell^2$, the space of square-summable sequences equipped with the norm metric. We also take a fixed domain $\Omega=B_r(0)\subset \ell^2$ with $r\in (0, 1)$. For $n\in \mathbb{N}$, let $e_n$ be the standard orthonormal basis and define 
\[
u_n(x):=\|x-e_n\|_{\ell^2}, \quad x\in \Omega.
\]
Since $e_n\notin\Omega$, each $u_n$ is a Monge solution of $|\nabla u_n|=1$ in $\Omega$. Moreover, for $x\in \Omega$, whenever $x_n\to x$ in $\ell^2$ as $n\to \infty$, we have 
\[
u_n(x_n)^2=\|x_n\|_{\ell^2}^2+1-2\langle x_n,e_n\rangle_{\ell^2}
\to \|x\|^2+1,
\]
because $\langle x_n,e_n\rangle_{\ell^2}\to0$. Thus, as $n\to \infty$, $u_n$ converges to
\[
u(x)=\sqrt{\|x\|_{\ell^2}^2+1}, \quad x\in \Omega
\]
in the sense of \eqref{unif-converge}. However, the limit $u$ is not a Monge solution; note that $u$ has a strict minimum at $0$, and hence $|\nabla^-u|(0)=0$. Here, conditions (A1) and (A2) hold trivially, but $\X$, as the limit space, is not proper. This example shows that for such general stability results, compactness assumptions, especially on the limit space, cannot be completely removed. 
 
In this example, $u_n$ does not converge to $u$ uniformly as $n\to \infty$. This shows that, in general, the condition \eqref{unif-converge} considered in this work is weaker than uniform convergence in infinite-dimensional spaces and therefore may not guarantee stability of solutions without additional space assumptions. If a sequence of Monge solutions $u_n$ of \eqref{eikonal1} does converge uniformly to some $u\in \lipl(\Omega)$ in $\Omega$, then $u$ is necessarily a Monge solution of \eqref{eikonal1} as well. This can also be deduced from the characterization \eqref{1-slope2}. See \cite[Proposition~3.3]{JC} for a stability result of this type in general length spaces.  
\end{example}

\subsection{Generalizations and a homogenization example}
The idea of adopting this nonlocal subslope can be extended to treat \eqref{eikonal} with a general inhomogeneous term $f: \X\to (0, \infty)$. In this case, one can change the metric $d$ to the so-called optical length function $L_f$ associated with the given $f$, defined by 
\begin{equation}\label{opt-length}
    L_f(x, y)=\inf_{\gamma\in \Gamma(x, y)} \int_\gamma f\, ds, \quad x, y\in \X,
\end{equation}
where $\Gamma(x, y)$ is the set of all rectifiable curves in $\X$ joining $x, y$ with finite path integral, i.e., 
\begin{equation}\label{curve-set}
    \Gamma(x, y):= \left\{\gamma: [0, \ell]\to \X: \ \gamma(0)=x, \gamma(\ell)=y, \ |\gamma'|=1 \ \text{for a.e. $s\in [0, \ell]$}\right\}. 
\end{equation}
Here, we assume $f$ to be bounded, continuous and $\inf_\X f>0$ in $\X$, which guarantees the finiteness of $L_f(x, y)$ for each pair of $x, y\in \X$. Under the new metric $L_f$, Monge solutions of the equation \eqref{eikonal} solve $|\nabla^-_{L_f} u|=1$ in $\Omega$,
where $|\nabla^-_{L_f}u|$ denotes the subslope with respect to $L_f$, namely, 
\[
|\nabla^-_{L_f} u|(x)=\limsup_{\Omega\ni y\to x}\dfrac{(u(x)-u(y))_+}{L_f(x,y)}, \quad \text{for $x\in \Omega$.}
\]
Then, we easily obtain a stability result for \eqref{eikonal} by adapting Theorem~\ref{thm:dom-stability} to the general setting; see Theorem~\ref{thm:dom-stability-gen}. We refer to \cite{LShZ2} for more details on the metric change method for \eqref{eikonal} in general metric measure spaces with a possibly discontinuous inhomogeneous term $f$. Our stability results can also be extended to the case of discontinuous $f$. 

Our method extends to a broader class of Hamilton-Jacobi equations 
\begin{equation}\label{general eq}
    H(x, u, |\nabla u|)=0 \quad \text{in $\Omega$,}
\end{equation}
where $H: \Oba\times \R\times [0, \infty)\to \R$ is continuous, strictly increasing in the last argument, and satisfies other structural assumptions. The strict monotonicity assumption allows a local reduction of \eqref{general eq} to an eikonal equation, so that the preceding stability arguments apply.

To illustrate our stability results, we present in Section~\ref{sec:example} a concrete example closely related to the homogenization of eikonal equations on periodically weighted square lattices approximating the Euclidean plane. For a positive $\mathbb Z^2$-periodic density $f\in C(\R^2)$, we consider the eikonal equations $|\nabla_{d_\varepsilon}u_\varepsilon|=f_\varepsilon$
on the rescaled lattices $\X_\varepsilon$, where $f_\varepsilon(x)=f(x/\varepsilon)$. We show that the associated optical metrics converge to the effective metric
\[
\bar d(x,y)=a_1|x_1-y_1|+a_2|x_2-y_2|,
\]
where $a_1$ and $a_2$ are the averaged optical costs in the horizontal and vertical directions. Consequently, our stability theorem yields that any limit of Monge solutions in the sense of \eqref{unif-converge} solves the effective equation $|\nabla_{\bar d}u|=1$; see Proposition~\ref{prop:lattice-example} for the precise statement. 

When $a_1=a_2$, this reduces to the classical eikonal equation with respect to the Euclidean $\|\cdot\|_1$ metric. In contrast, if $a_1\neq a_2$, the limiting Hamiltonian is anisotropic and, although it admits an explicit expression in Euclidean coordinates, it cannot be written in the form $|\nabla_d u|=f_0$ for any continuous function $f_0$. This example shows how our framework extends classical homogenization theory for Hamilton-Jacobi equations to periodic geodesic metric spaces with microscopically varying domains.

\subsection{Perron's method for Monge solutions}
As a further application of the characterization \eqref{1-slope2}, we develop Perron’s method in general metric spaces and recover the existence of Monge solutions to \eqref{eikonal1}. Existence of solutions to the time-dependent Hamilton-Jacobi equation based on the approach of Gangbo and {\'S}wi{\polhk{e}}ch~ was developed in \cite[Theorem 7.6]{GaS}, where the authors obtained a metric viscosity solution by taking the pointwise supremum over metric viscosity subsolutions in length spaces. On the other hand, the existence of Monge solutions is typically derived from the control-theoretic interpretation, and a direct Perron-type argument for Monge solutions in general metric spaces has remained less explored. Recently, \cite[Proposition~3.1]{JC} showed that the pointwise infimum of a family of Monge solutions of the simplest eikonal equation \eqref{eikonal1} is again a Monge solution. It remains to discuss the corresponding questions for pointwise suprema of Monge subsolutions and pointwise infima of Monge supersolutions to more general equations. Our results address these questions and, in particular, lead to the following Perron-type existence theorem for \eqref{eikonal}.

\begin{thm}[Perron's method for eikonal equations]\label{perron:gen_eikonal}
    Suppose that $(\X, d)$ is a complete length space and $\Omega\subsetneq \X$ is a domain. Assume that $f\in C(\Omega)$ is nonnegative. Assume that there exist a Monge subsolution $u_-$ and a Monge supersolution $u_+$ of \eqref{eikonal} such that $u_-\leq u_+$ in $\Omega$. Define $\ol{w},\ul{w}:\Omega\to \R$ by    \begin{equation}\label{Perron_sol_max}
        \ol{w}(x):=\sup\{u(x): \text{$u$ is a Monge subsolution of \eqref{eikonal} satisfying $u_-\leq u\leq u_+$ in $\Omega$}\},
    \end{equation}    \begin{equation}\label{Perron_sol_min}
        \ul{w}(x):=\inf\{u(x): \text{$u$ is a Monge supersolution of \eqref{eikonal} satisfying $u_-\leq u\leq u_+$ in $\Omega$}\}.
    \end{equation}
    Then, $\ol{w}$ and $\ul{w}$ are Monge solutions of \eqref{eikonal} with $\ul{w}\leq \ol{w}$.
\end{thm}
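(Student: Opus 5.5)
The plan is to follow the classical Perron scheme of Ishii, but to work only with the sphere-wise subslope $S_r^-$ from \eqref{r-subslope} and with the characterization of Monge sub- and supersolutions stated in Theorem~\ref{thm:char}; no test functions are used. Since $f$ is merely continuous and nonnegative, I first localize. Fix $x_0\in\Omega$ and a small $r$. On $B_r(x_0)$ I compare $f$ with the constants $f(x_0)\pm\delta$. When $f(x_0)>0$, I pass locally to the optical metric $L_f$ (equivalently, rescale $d$), which turns Monge sub/supersolution conditions of \eqref{eikonal} into the corresponding conditions for $|\nabla u|=1$, up to an error $\delta$. When $f(x_0)=0$ the supersolution property is trivial, and the subsolution property reduces to $|\nabla^- u|(x_0)\le 0$. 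In that case I argue directly from the Lipschitz-type bounds below.

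Step 1 shows that $\ol w$ is a Monge subsolution. Each admissible $u$ satisfies, by Theorem~\ref{thm:char}, the nonlocal inequality $u(x)-u(y)\le L_f(x,y)$ for $x,y$ in small balls; in other words it is locally $L_f$-Lipschitz with constant $1$, in the one-sided sense. Any such one-sided bound is preserved under pointwise suprema. Hence $\ol w$ satisfies $\ol w(x)-\ol w(y)\le L_f(x,y)$ locally, so $|\nabla^-\ol w|\le f$. In particular $\ol w$ is locally Lipschitz, since $u_-\le\ol w\le u_+$ keeps it finite. The same argument shows that $\ul w$, as an infimum of supersolutions, satisfies the supersolution property. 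I would take the needed "inf of supersolutions" fact by adapting \cite[Proposition~3.1]{JC}: via Theorem~\ref{thm:char}, the condition becomes $S_r^-u\ge$ (local optical value) on spheres. This condition survives infima, because for a near-minimizing $u$ at $x$ one picks a near-optimal point on the sphere and uses $\ul w\le u$ there. The converse inequalities also follow for $\ul w$ once it is known to be a subsolution, and that is obtained by the bump argument of Step 2 applied symmetrically.

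Step 2 shows that $\ol w$ is a supersolution, and this is the main obstacle. Suppose it fails at $x_0$. By Theorem~\ref{thm:char} there is a ball $\ol{B_r(x_0)}\subset\Omega$ with
\[
\ol w(x_0)-\ol w(y)\le (1-\theta)\,L_f(x_0,y)\quad\text{for all } y\in\partial B_r(x_0),
\]
for some $\theta>0$ (in the case $f(x_0)>0$). I then build the bump
\[
v(x)=\max\Bigl\{\ol w(x),\ \min_{y\in\partial B_r(x_0)}\bigl(\ol w(y)+L_f(x,y)\bigr)-\eta\Bigr\}
\]
inside the ball, with $v=\ol w$ outside. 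The inner function is an infimum of distance-type functions and is therefore a subsolution. It exceeds $\ol w$ at $x_0$ for a suitable small $\eta<\theta\,\inf_y L_f(x_0,y)$, and it lies below $\ol w$ near the sphere, so the gluing is again a subsolution. This uses the fact that the maximum of two subsolutions is a subsolution, which I would again check from the one-sided Lipschitz form. The delicate points are two. First, I must check $v\le u_+$, which follows from the supersolution property of $u_+$ in the $S_r^-$ form: $u_+(x)\ge\min_{\partial B}(u_+(y)+L_f(x,y))$, up to the error between the optical distance and the ball geometry in a length space, which I handle by shrinking $r$. Second, I must handle the case $\ol w(x_0)=u_+(x_0)$. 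In that case the failure inequality, combined with $u_+$ being a supersolution, gives a contradiction directly. Having $v$ admissible with $v(x_0)>\ol w(x_0)$ contradicts the definition of $\ol w$.

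Step 3 finishes the argument. Applying the analogous argument to $\ul w$ completes the proof that both are solutions. Finally, $\ul w\le\ol w$ holds because $\ol w$ is itself a Monge supersolution between $u_-$ and $u_+$, so it belongs to the family defining $\ul w$.
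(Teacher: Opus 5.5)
Your argument for $\ol w$ is essentially the paper's. The paper uses the cone $\psi(x)=\ol w(x_0)+\eta r/2-p_0\,d(x,x_0)$ instead of your envelope over the sphere. Once the case $\ol w(x_0)=u_+(x_0)$ is excluded, it gets $v\le u_+$ from continuity alone, since the bump has size $O(r)$. You should do the same: Theorem~\ref{thm:char}(ii) only controls spheres centered at the point itself, so it does not directly give your inequality $u_+(x)\ge\inf_{y\in\partial B_r(x_0)}(u_+(y)+L_f(x,y))$ for $x\ne x_0$.

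\emph{The genuine gap is in the treatment of $\ul w$.} Both halves of your plan there presuppose $\ul w\in\lipl(\Omega)$, and nothing in your argument produces it.
\begin{itemize}
\item An infimum of Monge supersolutions need not even be continuous; see the paper's example $u_n(x,y)=x+e^{-ny^2}$.
\item The sphere argument you borrow yields only the pointwise inequality $|\nabla^-\ul w|\ge f$. This is Proposition~\ref{prop:gen-eikonal2}, where local Lipschitz continuity is a \emph{hypothesis}.
\item The subsolution half cannot come from ``Step 2 applied symmetrically'', because the Monge formulation is one-sided. Supersolutions are closed under $\min$ but not under $\max$: $x$ and $-x$ solve $|u'|=1$ on $(-1,1)$, yet $|x|$ has zero subslope at $0$.
\item Likewise, $-L_f(\cdot,y)$ is in general not a supersolution. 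Its subslope is the ascending slope of $L_f(\cdot,y)$, which vanishes at local maxima of $L_f(\cdot,y)$ (e.g.\ at the antipode of $y$ on a circle).
\item Any contradiction argument started at a point $x_0$ where $|\nabla^-\ul w|(x_0)\le f(x_0)$ fails can never detect a failure of local Lipschitz continuity. For instance, $-\chi_{\{0\}}$ on $\R$ has zero subslope everywhere but is discontinuous.
\end{itemize}

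The missing idea in the paper is a reduction to solutions. For every admissible supersolution $v_0$, apply the already proved result for $\ol w$ with $u_+$ replaced by $v_0$. This gives a Monge solution $w_0$ with $u_-\le w_0\le v_0$, so $\ul w$ equals the infimum over admissible Monge \emph{solutions}; see \eqref{perron eq6}. Solutions are subsolutions, which supplies a uniform local Lipschitz bound. Then Proposition~\ref{prop:gen-eikonal1} gives $\ul w\in\lipl(\Omega)$ together with the subsolution property, and Proposition~\ref{prop:gen-eikonal2} gives the supersolution property (Theorem~\ref{thm:eikonal_gen_sol}).

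Alternatively, your direct route can be repaired with a genuinely asymmetric bump. Let $L$ be the optical distance of $f$ along curves in $\Omega$, and suppose $\ul w(x_0)-\ul w(y_0)>L(x_0,y_0)+2\delta$. Pick an admissible supersolution $u$ with $u(y_0)<\ul w(y_0)+\delta$, and replace it by $\min\{u,\,u(y_0)+\delta+L(\cdot,y_0)\}$. This function is still admissible:
\begin{itemize}
\item $u(y_0)+\delta+L(\cdot,y_0)$ is a supersolution away from $y_0$, and $u$ is the active function at $y_0$;
\item the new function stays above $u_-$, because $u_-(x)-u_-(y_0)\le L(x,y_0)$.
\end{itemize}
Its value at $x_0$ is below $\ul w(x_0)$, a contradiction. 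Hence $\ul w(x)-\ul w(y)\le L(x,y)$, which gives both $\ul w\in\lipl(\Omega)$ and the subsolution property. As written, however, your proposal contains neither this argument nor the paper's reduction.
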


A more general version of Theorem~\ref{perron:gen_eikonal}, applicable to \eqref{general eq}, is given in Theorem~\ref{gen_perron's method} under monotonicity and coercivity conditions on $H$. Our approach generalizes the classical Perron-type arguments to the metric setting. In contrast to the viscosity framework, we directly verify that $w$ is a Monge solution, without using test functions. A key step, closely connected to our preceding stability analysis, is to prove that the class of Monge subsolutions is closed under pointwise supremum. This structural property enables the Perron construction, and its proof is again based on the characterization \eqref{1-slope2}. Further details are provided in Section~\ref{sec:perron}.

This paper is organized as follows. In Section~\ref{sec:monge}, we revisit the definition of Monge solutions in metric spaces and establish the characterization \eqref{1-slope2}. Section~\ref{sec:stability} is devoted to the proof of our main stability result, Theorem~\ref{thm:dom-stability}, together with a discussion of extensions to more general equations. In Section~\ref{sec:perron}, we study the existence of Monge solutions by developing Perron's method in general metric spaces. Our approach applies not only to the eikonal equation but also to a broader class of Hamilton--Jacobi equations.

\subsection*{Acknowledgments}
We would like to thank Nicolas~Dirr, Federica~Dragoni, and Xiaodan~Zhou for their valuable comments during the preparation of this paper. We are also grateful to Claudio~Marchi for helpful discussions during the early stages of this work. 

\subsection*{Data statement}
 Data sharing is not applicable to this article as no data sets were generated or analysed during the current study.


\section{Sphere-based characterization of Monge solutions}\label{sec:monge}

Let us first recall the notion of Monge solutions introduced in \cite{LShZ} for the eikonal equation \eqref{eikonal}. The definition of Monge solutions for a general class of Hamilton-Jacobi equations in length spaces is given in \cite[Definition 4.1]{LShZ}. In the case of \eqref{eikonal}, Monge solutions are defined as follows. 

\begin{defi}[Monge solutions]\label{defi:monge}
    A function $u\in \lipl(\Omega)$ is called a Monge subsolution (resp., Monge supersolution) of \eqref{eikonal} if, for every $x\in \Omega$,
    \[ 
    |\nabla^-u|(x)\leq f(x)\qquad (\text{resp.,}\ |\nabla^-u|(x)\geq f(x)). 
    \]
    A function $u\in \lipl(\Omega)$ is said to be a Monge solution of \eqref{eikonal} if $u$ is both a Monge subsolution and a Monge supersolution of \eqref{eikonal}.
\end{defi}

Our stability argument relies on the following characterization of Monge solutions to $|\nabla u|=1$ via spheres. We recall that $S_r^-u(x)$ is defined by \eqref{r-subslope} when $\ol{B_r(x)}\subset \Omega$ for a domain $\Omega\subsetneq \X$. 

\begin{thm}[Sphere-based characterization of Monge solutions]\label{thm:char}
    Suppose that $(\X, d)$ is a complete length space and $\Omega\subsetneq \X$ is a domain. Let $u\in \lipl(\Omega)$.  Then, the following assertions hold. 
    \begin{itemize}
        \item[(i)] $u$ is a Monge subsolution of \eqref{eikonal1} if and only if $S_r^-u(x)\leq 1$ for all $x\in \Omega$ and all $r>0$ such that $\ol{B_r(x)}\subset \Omega$.
        \item[(ii)] When $u$ is assumed to be bounded from below on each closed ball in $\Omega$, $u$ is a Monge supersolution of \eqref{eikonal1} if and only if $S_r^-u(x)\geq 1$ for all $x\in \Omega$ and all $r>0$ such that $\ol{B_r(x)}\subset \Omega$. 
        \item[(iii)] $u$ is a Monge solution of \eqref{eikonal1} if and only if $S_r^- u(x)=1$ for all $x\in \Omega$ and all $r>0$ such that $\ol{B_r(x)}\subset \Omega$.
    \end{itemize}
\end{thm}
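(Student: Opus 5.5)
We prove (i) and (ii); (iii) then follows at once by combining them, since a Monge solution of \eqref{eikonal1} is bounded below on closed balls in $\Omega$ (being locally Lipschitz and, by (i), $1$-Lipschitz along short curves). The difficulty is asymmetric: the subsolution direction is essentially a curve-integration argument, whereas the supersolution direction needs a descent construction that uses completeness.

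\emph{Part (i).} Suppose first that $S_r^-u(x)\le 1$ for all admissible $r$. Given $x\in\Omega$ and $y$ close to $x$, set $r=d(x,y)$. Then $y\in\partial B_r(x)$ and $\ol{B_r(x)}\subset\Omega$ for $y$ near $x$, so $(u(x)-u(y))_+\le r=d(x,y)$. Taking the limsup gives $|\nabla^-u|(x)\le 1$. For the converse, assume $|\nabla^-u|\le1$ everywhere. We first show that $u$ is $1$-Lipschitz along rectifiable curves in $\Omega$. Take an arc-length curve $\gamma$ in $\Omega$; then $g(s)=u(\gamma(s))$ is locally Lipschitz, and its lower left Dini derivative satisfies $D_-g(s)\ge -|\nabla^- u|(\gamma(s))\ge -1$, and symmetrically in reverse parametrization. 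Since $g$ is Lipschitz, hence absolutely continuous, this yields $|g'|\le1$ a.e. (upper gradient property of the slope, cf.\ \cite{LShZ}). Hence $|u(a)-u(b)|\le \ell(\gamma)$. Now let $\ol{B_r(x)}\subset\Omega$ and $y\in\partial B_r(x)$. In a length space, for any $\delta>0$ there is a curve from $x$ to $y$ of length $<r+\delta$. The difficulty is that such a curve might leave $\ol{B_r(x)}$. To handle this, truncate it at its first hitting time $t^*$ of $\partial B_r(x)$, which gives a curve inside $\ol{B_r(x)}\subset\Omega$. The truncated curve, however, does not end at $y$. To fix this, we instead use $r'<r$ near $r$ and approximate: apply the argument to curves from $x$ to points of $\partial B_{r}(x)$ that stay in $B_{r+\eta}(x)\subset\Omega$. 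Openness of $\Omega$ together with compactness-free closedness gives $\eta>0$ by a dist argument, but only if $\ol{B_r(x)}$ has positive distance to $\partial\Omega$; this may fail in a non-proper space. The safer route is to use only the part of a near-geodesic of length $<r+\delta$: every point on it lies within distance $r+\delta$ of $x$. Since $u$ is locally Lipschitz near $y$ and $\Omega$ is open, for $\delta$ small the curve near its end stays in a neighborhood of $y$ inside $\Omega$. A continuity/induction argument along the curve (covering it by finitely many Lipschitz balls) then yields $u(x)-u(y)\le r+\delta$. Letting $\delta\to0$ gives $S_r^-u(x)\le1$.

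\emph{Part (ii).} If $S_r^-u(x)\ge1$ for all small $r$, pick $y_r\in\partial B_r(x)$ with $u(x)-u(y_r)\ge (1-\delta)r$. Letting $r\to0$ shows $|\nabla^-u|(x)\ge1$. The converse is the main obstacle. Assume $|\nabla^-u|\ge1$ on $\Omega$, fix $\ol{B_r(x)}\subset\Omega$ and $\delta\in(0,1)$; we must find $y\in\partial B_r(x)$ with $u(x)-u(y)\ge(1-\delta)r$. We build a descent sequence via an Ekeland-type procedure. Set $x_0=x$. Given $x_k\in B_r(x)$, the slope condition provides points $z$ arbitrarily close to $x_k$ with $u(x_k)-u(z)\ge(1-\delta)d(x_k,z)$. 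Choose $x_{k+1}$ among such points so as to nearly maximize the step length, subject to remaining in $\ol{B_r(x)}$, and then continue transfinitely or by a greedy choice. Along the sequence we have $u(x)-u(x_k)\ge(1-\delta)\sum_j d(x_j,x_{j+1})\ge(1-\delta)d(x,x_k)$. Because $u$ is bounded below on $\ol{B_r(x)}$, the total length is finite, so $(x_k)$ is Cauchy and converges by completeness to some $x_\infty\in\ol{B_r(x)}$. By continuity the inequality persists at $x_\infty$. If $x_\infty\in\partial B_r(x)$ we are done. Otherwise $x_\infty$ is an interior point, and the slope condition there permits a further descent step, contradicting the near-maximality of the greedy choices. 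This is the standard argument in the proof of Ekeland's principle, applied to the set $A=\{z\in\ol{B_r(x)}: u(x)-u(z)\ge(1-\delta)d(x,z)\}$, which is closed and contains $x$. Applying Ekeland to $u$ restricted to $A$ gives a point $z^*$ at which no strict $(1-\delta)$-descent is possible within $A$. Such a point must lie on the sphere: otherwise the slope condition, together with the triangle inequality, produces a descent point that is still in $A$. Finally, to land exactly on $\partial B_r(x)$, note that if $d(x,z^*)=r$ we take $y=z^*$.
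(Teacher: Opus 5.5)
Your proposal is correct. It has the same skeleton as the paper's proof: the ``$\Leftarrow$'' directions by restricting the limsup to spheres, (i) ``$\Rightarrow$'' via a near-geodesic plus a one-dimensional argument, (ii) ``$\Rightarrow$'' via Ekeland's principle using completeness and the lower bound, and (iii) by combining. The two nontrivial steps, however, are executed differently.

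In (i), the paper avoids differentiation. It adds $\frac{u(x_0)-u(y)}{\ell}t$ to $u\circ\gamma$, takes a maximum point $\hat t<\ell$, and reads off $|\nabla^-u|(\gamma(\hat t))\ge c>1$. You instead prove that $|\nabla^-u|$ is an upper gradient along curves, using a.e.\ differentiability of the Lipschitz function $g=u\circ\gamma$. One correction: the subslope controls the lower right Dini derivative ($\ge -1$) and the upper left one ($\le 1$), not the lower left one as you wrote. The conclusion $|g'|\le 1$ a.e.\ is unaffected. Your detour about curves leaving the ball can be replaced by one line, which the paper also leaves implicit. If $\gamma\colon[0,\ell]\to\X$ has $\ell<r+\delta$, then $d(x,\gamma(t))\le t\le r$ for $t\le r$, and $d(\gamma(t),y)\le \ell-t<\delta$ for $t\ge r$. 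Hence $\gamma([0,\ell])\subset\ol{B_r(x)}\cup B_\delta(y)\subset\Omega$ once $B_\delta(y)\subset\Omega$.

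In (ii), the paper argues by contradiction. Assuming $S_r^-u(x_0)=q<1$, it applies Ekeland to $F=u+c\,d(\cdot,x_0)$ on the whole ball, with small slope $\alpha=(1-c)/2$, starting from an approximate minimizer. It uses the gap $F\ge u(x_0)+(c-q)r$ on the sphere to keep the Ekeland point interior, and then contradicts $|\nabla^-F|\ge|\nabla^-u|-c$. You instead apply Ekeland to $u$ itself on the closed descent set $A$ with slope $1-\delta$. Because the cone condition is transitive, an interior Ekeland point would admit a strict descent point still in $A$. So the Ekeland point lies on the sphere and is directly the witness for $S_r^-u(x)\ge 1-\delta$.

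Your route is a direct, Caristi/drop-theorem style argument that needs no approximate minimizer and no quantitative gap. The paper's route works with a single auxiliary function on the whole ball and needs no auxiliary closed set. The greedy-sequence sketch you give before the Ekeland argument is superfluous. As written (``nearly maximize the step length''), it only works if the candidates range over the whole nested set $\{z\in\ol{B_r(x)}: u(x_k)-u(z)\ge(1-\delta)d(x_k,z)\}$. Keep only the Ekeland-on-$A$ version.
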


\begin{proof}
Let us begin with the proof of (i). We first prove $\Leftarrow$. Fix $x_0\in \Omega$ and let $\delta>0$ such that $\ol{B_\delta(x_0)}\subset \Omega$. Then, $\partial B_r(x_0)\subset \Omega$ for all $0<r<\delta$. By assumption, we have 
\[
S_r^-u(x_0)=\sup_{y\in \partial B_r(x_0)}\dfrac{(u(x_0)-u(y))_+}{r}\leq1
\]
for all $0<r<\delta$. It follows immediately that $(u(x_0)-u(y))_+\leq d(x_0, y)$ for all $y\in B_\delta(x_0)\setminus \{x_0\}$, which amounts to saying
\begin{equation}\label{char-eq1}
    \sup_{y\in B_\delta(x_0)\setminus \{x_0\}}\dfrac{(u(x_0)-u(y))_+}{d(x_0, y)}\leq 1. 
\end{equation}
Sending $\delta\to 0$ in \eqref{char-eq1}, we obtain $|\nabla^- u|(x_0)\leq 1$. 
    
We next turn to the proof of $\Rightarrow$ for subsolutions. Fix $x_0\in \Omega$ again and choose $r>0$ satisfying $\ol{B_r(x_0)}\subset \Omega$. We claim that  
\begin{equation}\label{char-eq4}
    u(x_0)-u(y)\leq d(x_0, y)=r\quad \text{for any $y\in \partial B_r(x_0)$.}
\end{equation}
In fact, suppose, to the contrary, that $u(x_0)-u(y)>c r$ for some $c>1$ and $y\in\partial B_r(x_0)$, we can find an arc-length parametrized curve $\gamma: [0, \ell]\to \Omega$ such that $\gamma(0)=x_0, \gamma(\ell)=y$ and 
\begin{equation}\label{char-eq5}
    u(\gamma(0))-u(\gamma(\ell))\geq c\ell. 
\end{equation}
Let 
\[
\phi(t):=u(\gamma(t))+\frac{u(x_0)-u(y)}{\ell}t, \quad t\in [0, \ell].
\]
We thus see that $\phi(0)=\phi(\ell)$ and therefore $\phi$ attains a maximum over $[0, \ell]$ at some $\hat{t}\in [0, \ell)$. Denote $\hat{x}=\gamma(\hat{t})$. It follows that 
\[
u(\hat{x})=u(\gamma(\hat{t}))\geq u(\gamma(t))+ \frac{u(x_0)-u(y)}{\ell}(t-\hat{t})
\]
for all $t> \hat{t}$. In view of \eqref{char-eq5}, we obtain 
\[
|\nabla^- u|(\hat{x})\geq \limsup_{t\to \hat{t}+} \frac{u(\gamma(\hat{t}))-u(\gamma(t))}{t-\hat{t}}\geq \frac{u(x_0)-u(y)}{\ell}\geq c>1, 
\]
which is a contradiction to the assumption that $u$ is a Monge subsolution of \eqref{eikonal1}. We thus complete the proof of \eqref{char-eq4}, which immediately implies $S_r^- u(x_0)\leq 1$ as desired. This concludes the proof of the equivalence for subsolutions. 

Let us prove the assertion (ii) for Monge supersolutions. The implication $\Leftarrow$ can be handled in a similar manner to the subsolution part. In fact, in this case, we have 
\begin{equation}\label{char-eq2}
    \sup_{y\in B_\delta(x)\setminus \{x\}}\dfrac{(u(x)-u(y))_+}{d(x, y)}\geq \sup_{y\in \partial B_r(x)}\dfrac{(u(x)-u(y))_+}{r}
\end{equation}
for all $\delta>r>0$ such that $\ol{B_{r}(x)}\subset\Omega$. Letting $\delta\to 0$ in \eqref{char-eq2} and using $S^-_r u(x)\geq1$, we obtain $|\nabla^- u|(x)\geq 1$.  
    
The proof for the reverse implication $\Rightarrow$ is different from the subsolution case and uses the additional assumption of the lower boundedness of $u$ on balls. Fix arbitrarily $x_0\in\Omega$ and $r>0$ such that $\ol{B_r(x_0)}\subset \Omega$. Since $u$ is assumed to be bounded from below on closed balls, there exists $C>0$ such that
\[
\inf_{{B_r(x_0)}}u\geq u(x_0)-Cr.
\]
We now show that $S_r^-u(x_0)\geq 1$. Assume by contradiction that $q:=S_r^-u(x_0)<1$. We choose $c\in (q, 1)$ and define
\[
F(x):=u(x)+cd(x,x_0),\qquad x\in \ol{B_r(x_0)}.
\]
The function $F$ is continuous and bounded below on the complete metric space $\ol{B_r(x_0)}$. Since $q<1$, we have, for every $y\in\partial B_r(x_0)$,
\begin{equation}\label{char_eq4}
    F(y)=u(y)+cr\geq u(x_0)+(c-q)r. 
\end{equation}
Set $\alpha=(1-c)/2$. Let us choose $z\in\ol{B_r(x_0)}$ and $\vep>0$ small enough such that
\[ 
\alpha\vep<\frac{(c-q)r}{2}\quad \text{and}\quad  F(z)\leq \inf_{\ol{B_r(x_0)}}F+\alpha\vep. 
\]
Applying Ekeland's variational principle (\cite[Theorem 1.1]{E1}, \cite[Theorem 1]{E2}) to $F$, there exist $\hat{z}\in\ol{B_r(x_0)}$ such that $F(\hat{z})\leq F(z)$ and
\begin{equation}\label{char_eq5}
    F(\hat{z})\leq F(y)+\alpha d(\hat{z},y)\qquad \text{for all}\ y\in \ol{B_r(x_0)}. 
\end{equation}
Moreover,
\[ 
F(\hat{z})\leq F(z)\leq F(x_0)+\alpha\vep\leq u(x_0)+\frac{(c-q)r}{2}. 
\]
Hence, in view of \eqref{char_eq4}, we have $\hat{z}\notin \partial B_r(x_0)$. Furthermore, \eqref{char_eq5} implies
\[ 
\alpha\geq \limsup_{y\to\hat{z}}\dfrac{(F(\hat{z})-F(y))_+}{d(\hat{z},y)}\geq \limsup_{y\to\hat{z}}\dfrac{(u(\hat{z})-u(y))_+}{d(\hat{z},y)}-c\geq 1-c=2\alpha, 
\]
which is a contradiction. Thus, $S^-_ru(x_0)\geq 1$ for all $r>0$ such that $\ol{B_r(x_0)}\subset \Omega$.

The assertion (iii) is a direct consequence of (i) and (ii). The subsolution characterization is already established in (i). For the supersolution part, note that, as a Monge solution, $u$ satisfies \eqref{char-eq4} on $\ol{B_r(x)}$, which implies its boundedness from below on $\ol{B_r(x)}$. Then one can apply (ii) to conclude the equivalence for the supersolution property. 
\end{proof}

\begin{rmk}
The sphere characterization of Monge solutions (resp., Monge subsolution, Monge supersolution) in Theorem~\ref{thm:char} also holds for the eikonal equation
\begin{equation}\label{eikonal_c}
   |\nabla^-u|(x)=c,\quad x\in\Omega, 
\end{equation}
for each $c>0$. This is due to a straightforward fact that $|\nabla^-(u/c)|=|\nabla^-u|/c$.
\end{rmk}

\begin{rmk}
Under the assumptions of Theorem~\ref{thm:char}, one can also establish an analogous characterization by replacing the sphere-based subslope $S_r^-u$ with the punctured-ball subslope
\[
D_r^-u(x):=\sup_{y\in B_r(x)\setminus\{x\}}\frac{(u(x)-u(y))_+}{d(x,y)},
\quad x\in\Omega.
\]
Indeed, for any fixed $x\in\Omega$ and $r>0$ satisfying $\ol{B_r(x)}\subset\Omega$,
\[
D_r^-u(x)\le1
\quad\Longleftrightarrow\quad
S_\rho^-u(x)\le1
\ \text{for every }\rho\in(0,r].
\]
Consequently, $u$ is a Monge subsolution of \eqref{eikonal1} if and only if
$D_r^-u(x)\le 1$ for all such $x$ and $r$. An analogous characterization also holds for Monge supersolutions. We remark that the nonlocal slope $D_r^-u$ was introduced in \cite{LTG} in the study of global eikonal equations on metric spaces.
\end{rmk}


\section{Stability of Monge solutions}\label{sec:stability}

\subsection{Eikonal equations with constant speed}\label{subsec:eikonal1}
In this section, we give a proof of our main result, Theorem~\ref{thm:dom-stability}. Let us begin with proving the stability of the Monge subsolution property, for which we can relax the assumptions on $(\X_0, d_0)$. 

\begin{prop}[Stability for Monge subsolutions]\label{prop:sub}
    Suppose that $(\X, d)$ is a metric space. For each $\vep\geq 0$, let $\X_\vep\subset \X$ be a complete length space with metric $d_\vep$ and $\Omega_\vep\subsetneq\X_\vep$ be a domain in $(\X_\vep, d_\vep)$. Assume that (A1)(A2) in Theorem~\ref{thm:dom-stability} hold. Let $u_\vep\in \lipl(\Omega_\vep)$ be a Monge subsolution of $|\nabla_{d_\vep} u_\vep|=1$ in $\Omega_\vep$ for each $\vep>0$. Assume that $u_\vep\to u$ as $\vep\to 0$ in the sense of \eqref{unif-converge}. Then $u$ is a Monge subsolution of $|\nabla_{d_0} u|=1$ in $\Omega_0$. 
\end{prop}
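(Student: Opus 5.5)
The plan is to avoid the full sphere characterization in the limit and work directly with the \emph{local} inequality
\[
u(x_0)-u(y_0)\le d_0(x_0,y_0)\quad\text{for } y_0 \text{ close to } x_0,
\]
which gives $|\nabla^-_{d_0}u|(x_0)\le 1$ straight from the definition \eqref{sub_slope}. The inequality will come from the corresponding one for $u_\vep$. That one is supplied by Theorem~\ref{thm:char}(i), applied in $(\X_\vep,d_\vep)$. We then pass to the limit using (A1), (A2) and \eqref{unif-converge}.

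Step 1 sets up the approximating balls. Fix $x_0\in\Omega_0$ and $r_0>0$ small so that $V_0=\{x\in\X_0: d_0(x,x_0)\le r_0\}\subset\Omega_0$. By (A2) there are $x_\vep\in\Omega_\vep$ and $r_\vep\to r_0$ with $V_\vep\subset\Omega_\vep$, $d(x_\vep,x_0)\to0$ and $d_H(V_\vep,V_0)\to0$. Take any $y_0\in V_0$. Since $\sup_{y\in V_0}\inf_{z\in V_\vep}d(y,z)\le d_H(V_\vep,V_0)$, we can pick $y_\vep\in V_\vep$ with $d(y_\vep,y_0)\le d_H(V_\vep,V_0)+\vep\to0$. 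In particular $y_\vep\in\Omega_\vep$.

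Step 2 is the inequality at level $\vep$. Set $\rho_\vep=d_\vep(x_\vep,y_\vep)\le r_\vep$. If $\rho_\vep=0$, the inequality below is trivial. Otherwise the closed $d_\vep$-ball of radius $\rho_\vep$ about $x_\vep$ is contained in $V_\vep\subset\Omega_\vep$, and $y_\vep$ lies on its sphere. Theorem~\ref{thm:char}(i) in the complete length space $(\X_\vep,d_\vep)$ gives $S^-_{\rho_\vep}u_\vep(x_\vep)\le1$, that is,
\[
u_\vep(x_\vep)-u_\vep(y_\vep)\le d_\vep(x_\vep,y_\vep).
\]

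Step 3 passes to the limit. By (A1), $d_\vep(x_\vep,y_\vep)\to d_0(x_0,y_0)$. By \eqref{unif-converge}, $u_\vep(x_\vep)\to u(x_0)$ and $u_\vep(y_\vep)\to u(y_0)$. Hence $u(x_0)-u(y_0)\le d_0(x_0,y_0)$ for all $y_0\in V_0$, and dividing by $d_0(x_0,y_0)$ and letting $y_0\to x_0$ gives $|\nabla^-_{d_0}u|(x_0)\le1$.

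It remains to check $u\in\lipl(\Omega_0)$. For $x,y$ in a small $d_0$-ball $B$ around $x_0$ whose doubled ball lies in $\Omega_0$, we apply the above with center $x$ and then with center $y$. The radius $d_0(x,y)$ is admissible for both, and the two inequalities give $|u(x)-u(y)|\le d_0(x,y)$, so $u$ is $1$-Lipschitz on $B$.

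The main point needing care is Step 2. There $y_\vep$ must genuinely lie in $\Omega_\vep$, on a sphere whose closed ball stays inside $\Omega_\vep$, so that Theorem~\ref{thm:char}(i) applies. Hausdorff closeness of the domains alone would not guarantee this. It is exactly what the ball-approximation in (A2) provides, as the example preceding Theorem~\ref{thm:dom-stability} shows. We also need (A2) only for small $r_0$, which suffices because the subslope is local.
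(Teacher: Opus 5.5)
Your proposal is correct and follows essentially the paper's own route: approximate via (A2), apply Theorem~\ref{thm:char}(i) at level $\vep$ on the $d_\vep$-sphere of radius $d_\vep(x_\vep,y_\vep)\le r_\vep$, pass to the limit with (A1) and \eqref{unif-converge}, and get local Lipschitz continuity by swapping centers. Taking all $y_0\in V_0$ and reading off $|\nabla^-_{d_0}u|(x_0)\le 1$ directly from the definition is only a cosmetic shortcut for the paper's second appeal to Theorem~\ref{thm:char}(i).

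One small fix is needed in the Lipschitz step, where a doubled ball is not enough. For $x,y\in B_r(x_0)$, the closed $d_0$-ball of radius $d_0(x,y)<2r$ about $x$ can reach distance nearly $3r$ from $x_0$. You therefore need $\{z\in\X_0: d_0(z,x_0)\le 3r\}\subset\Omega_0$, which is exactly the condition the paper assumes.
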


\begin{proof}
Fix $x_0\in \Omega_0$ and $r_0>0$ arbitrarily such that 
\begin{equation}\label{dom-stability eq1}
    V_0=\{y\in \X_0: d_0(x_0, y)\leq r_0\}\subset \Omega_0.
\end{equation}
By (A2), there exist $x_\vep\in \Omega_\vep$ and $r_\vep>0$ such that \eqref{dom-ball} holds and $r_\vep\to r_0$, $d(x_\vep, x_0)\to 0$ and $d_H(V_\vep, V_0)\to 0$ as $\vep\to 0$. 
Then, for any fixed $y_0\in V_0$ satisfying $d_0(x_0, y_0)=r_0$, one can take $y_\vep\in V_\vep\subset \Omega_\vep$ such that $d(y_\vep, y_0)\to 0$ as $\vep\to 0$. 
It follows from (A1) that $d_\vep(x_\vep, y_\vep)\to d_0(x_0, y_0)$ as $\vep\to 0$. By Theorem \ref{thm:char} for Monge subsolutions, we obtain
\[
u_\vep(x_\vep)-u_\vep(y_\vep)\leq (u_\vep(x_\vep)-u_\vep(y_\vep))_+\leq d_\vep(x_\vep, y_\vep). 
\]
Letting $\vep\to 0$, we get
\begin{equation}\label{dom-stability eq2}
    u(x_0)-u(y_0)\leq (u(x_0)-u(y_0))_+\leq d_0(x_0, y_0).
\end{equation}
We can use the same argument to show that, for any $r>0$ small such that $\{z\in \X_0: d_0(x_0, z)\leq 3r\}\subset \Omega_0$ holds, 
\begin{equation}\label{loc-lip0}
    |u(x)-u(y)|\leq d_0(x, y)\quad \text{for all $x, y\in \Omega_0$ with $d_0(x_0, x), d_0(x_0, y)< r$.}
\end{equation}
In fact, these $x, y$ satisfy $d_0(x, y)<2r$ and 
\[
\{z\in \X_0: d_0(z, x)\leq 2r\}\subset \{z\in \X_0: d_0(x_0, z)\leq 3r\}\subset \Omega_0,
\]
and therefore the argument above yields $u(x)-u(y)\leq d_0(x, y)$. Since 
\[
\{z\in \X_0: d_0(z, y)\leq 2r\}\subset \{z\in \X_0: d_0(x_0, z)\leq 3r\}\subset \Omega_0,
\]
exchanging the roles of $x, y$, we deduce \eqref{loc-lip0}. This shows that $u\in \lipl(\Omega_0)$ with respect to the metric $d_0$. It also follows from \eqref{dom-stability eq2} that
\[
\sup_{d_0(x_0, y)=r_0}\dfrac{(u(x_0)-u(y))_+}{r_0}\leq 1.    
\]
This amounts to saying that $S_{r_0}^- u(x_0)\leq 1$, where $S^-_{r_0}u$ is given by \eqref{1-slope2} with respect to $(\X_0, d_0)$. By Theorem~\ref{thm:char} again, we see that $u$ is a Monge subsolution of $|\nabla u|=1$ in $\Omega_0$. 
\end{proof}

We now prove Theorem~\ref{thm:dom-stability}. 
\begin{proof}[Proof of Theorem~\ref{thm:dom-stability}]
In view of Proposition \ref{prop:sub}, it suffices to show the supersolution property of $u$. Fix $x_0\in \Omega_0$ and $r_0>0$ arbitrarily such that \eqref{dom-stability eq1} holds. We again use (A2) to get $x_\vep\in \Omega_\vep$ and $r_\vep>0$ such that $V_\vep\subset \Omega_\vep$, and $d(x_\vep, x_0)\to 0$, $r_\vep\to r_0$ and $d_H(V_\vep, V_0)\to 0$ as $\vep\to 0$. 

By Theorem~\ref{thm:char}, for each $\sigma>0$ we can find  $\tilde{y}_\vep\in V_\vep$ with $d_\vep(x_\vep, \tilde{y}_\vep)=r_\vep$ for $\vep>0$ small such that 
\begin{equation}\label{dom-convergence3}
    u_\vep(x_\vep)-u_\vep(\tilde{y}_\vep)\geq (1-\sigma) d_\vep(x_\vep, \tilde{y}_\vep).
\end{equation}
Since $d_H(V_\vep, V_0)\to 0$, we can find a sequence $\{\tilde{y}_\vep'\}$ in $V_0$ such that $d(\tilde{y}_\vep,\tilde{y}_\vep')\to 0$ as $\vep\to 0$. By the compactness of $V_0$, passing to a subsequence, we have $d_0(\tilde{y}'_\vep, \tilde{y}_0)\to 0$ for some $\tilde{y}_0\in V_0$ as $\vep\to 0$. By \eqref{compat-metric}, this yields $d(\tilde{y}'_\vep, \tilde{y}_0)\to 0$ and $d(\tilde{y}_\vep, \tilde{y}_0)\to 0$ as $\vep\to 0$ again up to a subsequence. By (A1), we further get $d_\vep(x_\vep, \tilde{y}_\vep) \to d_0(x_0, \tilde{y}_0)$ as $\vep\to 0$. Hence, $d_0(x_0,\tilde{y}_0)=r_0$.  Applying the convergence $u_\vep\to u$ to \eqref{dom-convergence3}, we obtain
\[
u(x_0)-u(\tilde{y}_0)\geq (1-\sigma) d_0(x_0, \tilde{y}_0).
\]
Passing to the limit as $\sigma\to 0$, we deduce
\[
\sup_{d_0(x_0, y)=r_0}\dfrac{(u(x_0)-u(y))_+}{r_0}\geq 1.
\]
This directly yields $S^-_{r_0} u(x_0)\geq 1$ for all $x_0\in \Omega_0$ and $r_0>0$ small. Using Theorem~\ref{thm:char} once again, we conclude that $u$ satisfies $|\nabla^- u|(x_0)\geq 1$ in the metric $d_0$. 
\end{proof}

Compared to Proposition~\ref{prop:sub}, the stability of supersolutions established in the proof of Theorem~\ref{thm:dom-stability} requires an additional properness assumption on $(\X_0, d_0)$. Although no properness assumption is imposed on the approximating spaces $(\X_\vep, d_\vep)$ in Theorem~\ref{thm:dom-stability}, properness of the limit space $(\X_0, d_0)$ is needed. Without this assumption, the stability result may fail, as shown in Example~\ref{ex:L2_instability}.

\subsection{More general equations}
Our results in the preceding subsection can be extended to more general eikonal equations and Hamilton-Jacobi equations in metric spaces. We assume that the ambient space $(\X, d)$ is a complete length space. Let us first consider \eqref{eikonal} with a domain $\Omega\subsetneq \X$. Let $f: \X\to \R$ be a bounded function satisfying
\begin{equation}\label{f-lower}
    \alpha:=\inf_{x\in \X} f(x)>0. 
\end{equation}
Note that it is sufficient to assume $f$ bounded with $\alpha=\inf_{\Oba} f>0$ to consider Monge solutions of \eqref{eikonal}. Here, we extend this condition to the entire space $\X$ for our later convenience of asymptotic analysis, especially in studying stability with respect to variations of the domain. 

The general equation \eqref{eikonal} can be handled by introducing the optical length function $L_f$ in \eqref{opt-length}. It is not difficult to show that $L_f$ is a metric in $\X$ and $(\X, L_f)$ is a length space; see \cite[Lemma 2.1, Remark 2.2]{LShZ2} for details. By adopting a metric change from $d$ to $L_f$, we can convert \eqref{eikonal} to \eqref{eikonal1}, which largely facilitates our generalization of the stability result in Section~\ref{subsec:eikonal1}. 

Since $f$ is bounded in $\X$, there exists $M>0$ such that
\begin{equation}\label{f-upper}
    M=\sup_{x\in \X} f(x). 
\end{equation}
Combining \eqref{f-lower} and \eqref{f-upper}, we also see that
\[
\alpha d(x, y)\leq L_f(x, y)\leq Md(x, y) \quad\text{for $x, y\in \X$.}
\]
In other words, $L_f$ and $d$ are bi-Lipschitz equivalent. Hence, changing metric $d$ to $L_f$ preserves the original topology of the space with respect to $d$. 

Let us now proceed to introduce more details about our stability result for \eqref{eikonal} with respect to both $f$ and $\Omega$. For each $\vep> 0$, let $(\X_\vep,d_\vep)\subset (\X,d)$ be a complete length space and $\Omega_\vep\subsetneq\X_\vep$ be a domain in $(\X_\vep, d_\vep)$. Let $f_\vep: \X_\vep \to \R$ be a bounded positive function for each $\vep> 0$. Assume that 
\begin{equation}\label{f-vep-bound}
    \alpha=\inf_{x\in \X_\vep, \vep> 0} f_\vep(x)\leq \sup_{x\in \X_\vep , \vep> 0} f_\vep(x)=M \quad\text{for some $\alpha, M>0$.}
\end{equation}
For each $\vep> 0$, the function $f_\vep$ defines an optical length function $L_{f_\vep}$ as given by \eqref{opt-length} with metric $d_\vep$. More precisely, for $x, y\in \X_\vep$ we define
\begin{equation}\label{optical-length-eps}
    L_{f_\vep}(x, y)=\inf\left\{\int_{0}^{\ell} f_\vep(\gamma(s))\, ds: \gamma\in \Gamma_\vep(x, y), \ \gamma(0)=x, \ \gamma(\ell)=y\right\},
\end{equation}
where $\Gamma_\vep(x, y)$ represents the class of arc-length parametrized curves joining $x, y$ with respect to the metric $d_\vep$, similar to $\Gamma(x, y)$ given in \eqref{curve-set}. 

We can therefore obtain a stability result for Monge solutions to \eqref{eikonal} by replacing $d_\vep$ by $L_{f_\vep}$ in Theorem~\ref{thm:dom-stability}, as stated below. 

\begin{thm}[Stability of Monge solutions to general eikonal equations]\label{thm:dom-stability-gen}
    Let $(\X,d)$ be a complete length space, and let $(\X_\vep,d_\vep)\subset(\X,d)$ be a complete length space for $\vep>0$. Let $f_\vep\in C(\X_\vep)$ satisfy \eqref{f-vep-bound} and $L_{f_\vep}$ be defined by \eqref{optical-length-eps}. Let $(\X_0,\bar d)\subset(\X,d)$ be a proper geodesic space, and assume that 
    \[ 
    \bar d(x_j,x)\to 0 \quad \text{implies} \quad d(x_j,x)\to0 \quad\text{for any $x_j, x\in\X_0$}. 
    \] 
    Let $\Omega_\vep\subsetneq\X_\vep$ be domains in $(\X_\vep,d_\vep)$, $\vep>0$ and $\Omega_0\subsetneq\X_0$ be a domain in $(\X_0,\bar d)$, satisfying the following conditions.
    \begin{enumerate}
    \item[(A1')] If $x_\vep, y_\vep\in \Omega_\vep$, $x_0, y_0\in \Omega_0$ satisfy \eqref{dom-convergence1}, then
    \begin{equation}\label{dom-convergence2-gen}
        L_{f_\vep}(x_\vep, y_\vep)\to \bar{d}(x_0, y_0) \quad \text{as $\vep\to 0$.}
    \end{equation}
    \item[(A2')] For any $x_0\in \Omega_0$ and $r_0>0$ small satisfying $V_0':=\{x\in \X_0: \bar{d}(x, x_0)\leq r_0\}\subset \Omega_0$, there exist $x_\vep\in \Omega_\vep$, $r_\vep>0$ such that 
    \[
    V_\vep':=\{x\in \X_\vep: L_{f_\vep}(x, x_\vep)\leq r_\vep\}\subset \Omega_\vep,
    \]
    and $r_\vep\to r_0$, $d(x_\vep, x_0)\to 0$ and $d_H(V_\vep', V_0')\to 0$ as $\vep\to 0$.
    \end{enumerate}
    Let $u_\vep\in \lipl(\Omega_\vep)$ be a Monge solution of $|\nabla_{d_\vep} u_\vep|=f_\vep$ in $\Omega_\vep$ for each $\vep>0$. Assume that $u_\vep\to u$ as $\vep\to 0$ in the sense of \eqref{unif-converge}. Then $u$ is a Monge solution of $|\nabla_{\bar{d}} u|=1$ in $\Omega_0$. If in addition there exists a positive bounded function $f_0\in C(\Omega_0)$ and a metric $d_0$ in $\X_0$ such that $d_0$ and $\bar{d}$ are locally bi-Lipschitz equivalent and for any fixed $x\in \Omega_0$,
    \begin{equation}\label{density}
        \lim_{d_0(x, y)\to 0}\frac{\bar{d}(x, y)}{d_0(x, y)}= f_0(x),     
    \end{equation}
    then $u$ is a Monge solution of $|\nabla_{d_0} u|=f_0$ in $\Omega_0$ with respect to $d_0$.
\end{thm}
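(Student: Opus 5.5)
The plan is to reduce the statement to Theorem~\ref{thm:dom-stability}, applied with the metrics $d_\vep$ replaced by the optical length functions $L_{f_\vep}$ for $\vep>0$ and with $d_0$ replaced by $\bar d$. The second assertion, about the equation with $f_0$, will then follow from a pointwise comparison of subslopes.

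\textbf{Step 1: $u_\vep$ solves the unit eikonal equation in the optical metric.} For each $\vep>0$, the space $(\X_\vep, L_{f_\vep})$ is a length space by \cite[Lemma 2.1, Remark 2.2]{LShZ2}. By \eqref{f-vep-bound} we have $\alpha d_\vep\le L_{f_\vep}\le M d_\vep$, so $L_{f_\vep}$ is bi-Lipschitz equivalent to $d_\vep$. Hence $(\X_\vep, L_{f_\vep})$ is complete, $\Omega_\vep$ is still a domain, and $\lipl(\Omega_\vep)$ is unchanged.

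I then show that $u_\vep$ is a Monge solution of $|\nabla_{L_{f_\vep}}u_\vep|=1$. The key fact is that continuity of $f_\vep$ gives, for fixed $x\in\Omega_\vep$,
\[
L_{f_\vep}(x,y)/d_\vep(x,y)\to f_\vep(x) \quad\text{as } y\to x.
\]
The upper bound uses near-geodesics in the length space $(\X_\vep,d_\vep)$. The lower bound uses that any curve from $x$ to $y$ has length at least $d_\vep(x,y)$, and that a curve of bounded cost stays in a small ball where $f_\vep\ge f_\vep(x)-\eta$. Consequently
\[
|\nabla^-_{L_{f_\vep}}u_\vep|(x)=|\nabla^-_{d_\vep}u_\vep|(x)/f_\vep(x)=1.
\]

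\textbf{Step 2: apply Theorem~\ref{thm:dom-stability}.} Its hypotheses hold with $d_\vep$ replaced by $L_{f_\vep}$ and $d_0$ replaced by $\bar d$. The limit space $(\X_0,\bar d)$ is proper (and geodesic, hence a complete length space) and satisfies the compatibility \eqref{compat-metric}. Assumptions (A1') and (A2') are exactly (A1) and (A2) in the new metrics. In (A2) the convergence $d(x_\vep,x_0)\to0$ and the Hausdorff distance are still measured in the ambient $d$, matching (A2').

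I must also check the inclusion $(\X_\vep, L_{f_\vep})\subset(\X,d)$. Theorem~\ref{thm:dom-stability} only uses the ambient metric $d$ through \eqref{dom-convergence1}, \eqref{unif-converge}, the Hausdorff distance and \eqref{compat-metric}, all of which are stated in terms of $d$. So replacing the intrinsic metrics changes nothing there. The convergence \eqref{unif-converge} is likewise in terms of $d$. The theorem therefore yields that $u$ is a Monge solution of $|\nabla_{\bar d}u|=1$ in $\Omega_0$.

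\textbf{Step 3: pass from $\bar d$ to $d_0$.} Since $d_0$ and $\bar d$ are locally bi-Lipschitz equivalent, they induce the same topology near each point and the same class $\lipl(\Omega_0)$. By \eqref{density}, for fixed $x$ and $y\to x$ (equivalently in either metric) we have $\bar d(x,y)=(f_0(x)+o(1))\,d_0(x,y)$. Therefore
\[
|\nabla^-_{d_0}u|(x)=\limsup_{y\to x}\frac{(u(x)-u(y))_+}{\bar d(x,y)}\cdot\frac{\bar d(x,y)}{d_0(x,y)} = f_0(x)\,|\nabla^-_{\bar d}u|(x)=f_0(x).
\]
Here $f_0(x)>0$ and $|\nabla^-_{\bar d}u|(x)=1$ is finite, so the limsup of the product splits as written.

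\textbf{Main obstacle.} I expect the main obstacle to be Step 1, namely the rigorous infinitesimal comparison $L_{f_\vep}(x,y)\sim f_\vep(x)\,d_\vep(x,y)$. The lower bound needs a localization of near-optimal curves. This works because their length is at most $(M/\alpha)\,d_\vep(x,y)+o(1)$, so they stay in a small ball where continuity of $f_\vep$ applies. Everything else is bookkeeping in matching the hypotheses of Theorem~\ref{thm:dom-stability}.
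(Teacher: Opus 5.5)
Your proposal is correct and follows the paper's proof essentially step for step. You convert to the unit eikonal equation in the optical metric $L_{f_\vep}$, apply Theorem~\ref{thm:dom-stability} with $d_\vep$ replaced by $L_{f_\vep}$ and $d_0$ by $\bar d$ (where (A1'), (A2') are exactly (A1), (A2)), and then get the $d_0$-statement from \eqref{density} by splitting the limsup. The only difference is that you spell out the infinitesimal comparison $L_{f_\vep}(x,y)/d_\vep(x,y)\to f_\vep(x)$, using near-geodesics for the upper bound and localization of near-optimal curves for the lower bound; the paper leaves this as a direct check from Definition~\ref{defi:monge}.
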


\begin{proof}
Since $f_\vep$ is continuous in $(\X_\vep, d_\vep)$ for all $\vep> 0$, one can use Definition~\ref{defi:monge} to show that $u_\vep$ is a Monge solution of $|\nabla u_\vep|=f_\vep$ in $\Omega_\vep$  with respect to $d_\vep$ if and only if it is a Monge solution of $|\nabla u_\vep|=1$ in $\Omega_\vep$ with respect to $L_{f_\vep}$. Moreover, under the geodesic structure of $(\X_0, \bar{d})$, conditions (A1') and (A2') match conditions (A1) and (A2) in Theorem~\ref{thm:dom-stability} with $d_\vep=L_{f_\vep}$ for $\vep>0$ and $d_0=\bar{d}$. Hence, this stability result can be obtained by directly applying Theorem~\ref{thm:dom-stability}. 

The last statement regarding the Monge solution property of $u$ with respect to $d_0$ is also a direct consequence. In fact, by using \eqref{density} and the definition of Monge solutions, we have, for each $x\in \Omega_0$,
\[
|\nabla_{d_0}^- u|(x)=\limsup_{d_0(x, y)\to 0} \frac{(u(x)-u(y))_+}{d_0(x, y)}=f_0(x)\limsup_{\bar{d}(x, y)\to 0} \frac{(u(x)-u(y))_+}{\bar{d}(x, y)}=f_0(x)|\nabla^-_{\bar{d}} u|=f_0(x).
\]
This completes the proof. 
\end{proof}

\begin{rmk}
Suppose that $(\X,d)$ is a proper geodesic space and $\Omega\subsetneq \X$ is a domain such that $\X_\vep=\X$ and $\Omega_\vep=\Omega$ for all $\vep\geq 0$. Then Theorem~\ref{thm:dom-stability-gen} reduces to the stability of solutions to \eqref{eikonal} under perturbations of the inhomogeneous term $f$ alone. Let $L_{f_\vep}$ be defined as in \eqref{optical-length-eps} with $f_\vep\in C(\X)$ uniformly bounded for all $\vep\geq 0$. In this setting, an assumption stronger than (A1') and (A2') is as follows:
\begin{equation*}
    L_{f_\vep}(\cdot, K)\to L_{f_0}(\cdot, K) \quad \text{uniformly in $\Omega$ as $\vep\to 0$, for every compact set $K\subset \Omega$.}
\end{equation*}
In particular, if $f_\vep$ converges to a bounded continuous function $f_0$ uniformly in $\Omega$, then the uniform limit $u_0$ of a family of Monge solutions $u_\vep$ to $|\nabla u|=f_\vep$ in $\Omega$ is a Monge solution corresponding to $f_0$. This is consistent with the classical stability theory for viscosity solutions of eikonal equations in Euclidean space. 
\end{rmk}

\begin{rmk}
We can drop the continuity assumption on $f_\vep$ in Theorem~\ref{thm:dom-stability-gen}. Under the condition \eqref{f-vep-bound}, one can define Monge solutions of $|\nabla u_\vep|=f_\vep$ in $d_\vep$ to be Monge solutions of $|\nabla u_\vep|=1$ in $L_{f_\vep}$; see \cite[Definition 3.1]{LShZ2}. Under this definition, the corresponding stability result for discontinuous eikonal equations is essentially the same as Theorem~\ref{thm:dom-stability}. 
        
It is possible to further relax the boundedness of $f_\vep$ if the metric $L_{f_\vep}$ can be properly defined in $\X$ via path integrals without altering the space topology. In this case, \eqref{dom-convergence2} should be replaced with the stronger condition \eqref{dom-convergence2-gen}. We do not pursue this direction here, and instead refer to \cite{LShZ2} again for related discussions about Monge solutions to general discontinuous or singular eikonal equations in metric spaces. 
\end{rmk}

Finally, let us briefly discuss a further generalization of Theorem~\ref{thm:dom-stability-gen} for general Hamilton-Jacobi equations in the form of \eqref{general eq},
where $H: \X\times \R\times [0,\infty)\to \R$ is a continuous function satisfying the assumption below.  
\begin{enumerate}
    \item[(H1)] For each $x\in \X$ and $r\in \R$, $p\mapsto H(x, r, p)$ is strictly increasing in $[0, \infty)$. 
\end{enumerate}

Under (H1), let us consider Monge solution (resp., Monge subsolution, Monge supersolution) $u$ of \eqref{general eq} in $\Omega$, defined to be $u\in \lipl(\Omega)$ such that 
\begin{equation}\label{Monge general}
    H(x, u(x), |\nabla^- u|(x))=0\quad (\text{resp.}\; \leq 0,\; \geq 0) , \quad x\in \Omega. 
\end{equation}
Assume that $|\nabla^- u|(x_0)>0$ at some $x_0\in \Omega$. Then, thanks to the strict monotonicity (H1), we can deduce, at least formally,  
\begin{equation}\label{general loc eq}
    |\nabla u|(x)=h(x),\quad \text{$x\in B_r(x_0)$,}
\end{equation}
where $r>0$ is small and $h: B_{r}(x_0)\to \R$ is a continuous function. In fact, $u$ is a Monge solution of \eqref{general eq} restricted to $B_r(x_0)$ if and only if $u$ is a Monge solution of \eqref{general loc eq}. By the continuity of $h$, we can choose $r>0$ further small so that 
\[
\inf_{x\in B_r(x_0)} h(x)>0. 
\]
With this reformulation, we can generalize our stability results in Theorem~\ref{thm:dom-stability-gen} for Monge solutions $u$ of \eqref{general eq} provided that $|\nabla^- u|>0$ in $\Omega$. We omit further details here.

\subsection{Example: a two-dimensional square lattice}\label{sec:example}

We illustrate Theorem~\ref{thm:dom-stability} and Theorem~\ref{thm:dom-stability-gen} through a family of rescaled square lattices. This setting is also closely related to homogenization on periodic lattices as the mesh size tends to zero. 

Let $(\X,d)$ be the Euclidean space $\R^2$ equipped with $1$-norm $\|\cdot\|_1$, i.e., 
\[
d(x, y)=\|x-y\|_1=|x_1-y_1|+|x_2-y_2|, \quad x, y\in \R^2.
\]
Rescaling the square lattice $\L^2:=(\Z\times\R)\cup(\R\times\Z)$ with  $\vep\in(0,1)$, we take
\begin{equation}\label{X-eps}
    \X_\vep:=\vep \L^2=(\vep\Z\times\R)\cup(\R\times\vep\Z),
\end{equation}
equipped with the intrinsic metric $d_\vep$ induced by $d$. Then, $(\X_\vep,d_\vep)$ is a proper geodesic metric space. Moreover, we have
\[
\|x-y\|_1\le d_\vep(x,y)\le \|x-y\|_1+\vep, \quad x,y\in\X_\vep.
\]
Let $f\in C(\R^2)$ be a positive $\Z^2$-periodic function. For $\vep>0$, define
\begin{equation}\label{periodic_f_vep}
    f_\vep(x):=f\!\left(\frac{x}{\vep}\right),\quad x\in\X_\vep,
\end{equation}
and denote by $L_{f_\vep}$ the optical length function associated with $f_\vep$ and $d_\vep$ as in \eqref{optical-length-eps}. Set
\begin{equation}\label{effective-coefficients}
    a_1:=\int_0^1f(t,0)\,dt,\qquad
    a_2:=\int_0^1f(0,t)\,dt.
\end{equation}
These constants represent the optical costs of traversing one unit cell in the horizontal and vertical directions, respectively. By periodicity, for any two vertices $x, y\in \vep\Z^2$ given by $x=(\vep h_x, \vep k_x)$, $y=(\vep h_y, \vep k_y)$ with $(h_x, k_x), (h_y, k_y)\in \Z^2$, we easily see that
\begin{equation}\label{L_vep_metric}
    L_{f_\vep}(x, y)=\vep a_1|h_x-h_y|+\vep a_2|k_x-k_y|.
\end{equation}
This suggests that the limiting metric $\bar{d}$ of $L_{f_\vep}$ on $\R^2$ as $\vep\to 0$ should be
\begin{equation}\label{limit_metric}
    \bar d(x,y):=a_1|x_1-y_1|+a_2|x_2-y_2|,\quad \text{for } x=(x_1, x_2),\ y=(y_1, y_2)\in\R^2.
\end{equation}
In fact, letting
\[
a^\ast=\max\{a_1, a_2\}, \quad M:=\max_{\R^2} f, \quad C_1=a^\ast+M,
\]
we have 
\begin{equation}\label{estimate:L_vep-d_bar}
    \left|L_{f_\vep}(x,y)-\bar{d}(x,y)\right|\leq C_1\vep
    \quad\text{for all $x,y\in\X_\vep$ and $\vep>0$ small.}
\end{equation}
Indeed, given $x, y\in \X_\vep$, we can choose vertices
$\hat x,\hat y\in\vep\Z^2$
lying on the same grid edges as $x$ and $y$, respectively, such that
$d_\vep(x,\hat x), d_\vep(y,\hat y)\le \vep/2$, which yields
\[
|L_{f_\vep}(x,y)-L_{f_\vep}(\hat x,\hat y)|\le M\vep.
\]
Since $|\bar d(x,y)-\bar d(\hat x,\hat y)|\le a^*\vep$ holds and \eqref{L_vep_metric} implies $L_{f_\vep}(\hat x,\hat y)=\bar d(\hat x,\hat y)$, we obtain \eqref{estimate:L_vep-d_bar} immediately.

Noticing that $f\in C(\R^2)$ is positive and periodic, we further obtain 
\begin{equation}\label{bi-Lipschitz_d_1-d_bar}
    a_*d(x, y)\le \bar{d}(x,y)\le a^* d(x, y), \quad x,y\in\R^2,
\end{equation}
where we take $a_*:=\min\{a_1, a_2\}>0$. In other words, $\bar d$ is bi-Lipschitz equivalent to the metric induced by $\|\cdot\|_1$, and therefore $(\R^2, \bar d)$ is a proper geodesic space.

Let $\Omega_0\subset\R^2$ be a domain and $\Omega_\vep:=\Omega_0\cap\X_\vep$.
We assume that $\Omega_\vep$ is a domain in $\X_\vep$ when $\vep>0$ is sufficiently small. Next we verify assumptions (A1') and (A2') in Theorem~\ref{thm:dom-stability-gen}.

To verify (A1'), suppose that there are $x_0, y_0\in \Omega_0$ and $x_\vep, y_\vep\in \Omega_\vep\subset \Omega_0$ such that \eqref{dom-convergence1} holds. In view of \eqref{estimate:L_vep-d_bar} and \eqref{bi-Lipschitz_d_1-d_bar}, we thus obtain 
\begin{align*}
|L_{f_\vep}(x_\vep, y_\vep)-\bar{d}(x_0, y_0)|\leq &\ |L_{f_\vep}(x_\vep, y_\vep)-\bar{d}(x_\vep, y_\vep)|+|\bar{d}(x_\vep, y_\vep)-\bar{d}(x_0, y_0)|\\
\leq&\ C_1\vep+a^\ast(d(x_\vep,x_0)+d(y_\vep,y_0))\to 0,    
\end{align*}
which yields \eqref{dom-convergence2-gen} in (A1') immediately. 

Let us now show that (A2') holds. Fix $x_0\in\Omega_0$ and $r_0>0$ such that
\[
V_0':=\{x\in\R^2:\bar{d}(x,x_0)\leq r_0\}\subset\Omega_0.
\]
For sufficiently small $\vep>0$, choose $x_\vep\in\Omega_\vep$ satisfying $d(x_\vep, x_0)\leq\vep/2$, and let
\[
r_\vep:=r_0-2C_1\vep>0,\quad
V_\vep':=\{x\in\X_\vep:L_{f_\vep}(x,x_\vep)\leq r_\vep\}.
\]
By \eqref{estimate:L_vep-d_bar} and \eqref{bi-Lipschitz_d_1-d_bar}, any $x\in V_\vep'$ satisfies 
\[
\bar{d}(x,x_0)\leq\bar{d}(x,x_\vep)+\bar{d}(x_\vep,x_0)\leq L_{f_\vep}(x,x_\vep)+C_1\vep+a^*d(x_\vep,x_0)\leq r_0-C_1\vep+\frac{a^*}{2}\vep
\leq r_0, 
\]
which amounts to saying that $V_\vep'\subset V_0'\cap\X_\vep\subset\Omega_\vep$. 

It remains to prove $d_H(V_\vep', V_0')\to 0$ as $\vep\to 0$. Set $\eta_\vep:=(3C_1+a^\ast)\vep$ so that  $\eta_\vep<r_0$ when $\vep>0$ is taken small. Fix $z\in V_0'\setminus \{x_0\}$ arbitrarily and take
\[
z':=x_0+\left(1-\frac{\eta_\vep}{\bar{d}(z,x_0)}\right)_+(z-x_0).
\]
We can easily obtain the estimates that
\begin{equation}\label{appox:z}
    \bar{d}(z,z')\leq\eta_\vep,\quad
    \bar{d}(z',x_0)\leq r_0-\eta_\vep.
\end{equation}
Choose $z_\vep\in\X_\vep$ such that $d(z_\vep,z')\leq\vep/2$. By \eqref{estimate:L_vep-d_bar}, \eqref{bi-Lipschitz_d_1-d_bar}, \eqref{appox:z} as well as the choice of $r_\vep$, we then have
\begin{align*}
L_{f_\vep}(z_\vep,x_\vep)
&\leq \bar{d}(z_\vep,x_\vep)+C_1\vep\leq \bar{d}(z_\vep,z')+\bar{d}(z',x_0)
 +\bar{d}(x_0,x_\vep)+C_1\vep\\
&\leq \frac{1}{2}a^*\vep+(r_0-\eta_\vep)
 +\frac{1}{2}a^*\vep+C_1\vep=r_0-2C_1\vep=r_\vep.
\end{align*}
This shows that $z_\vep\in V_\vep'$. Furthermore, the lower bound in \eqref{bi-Lipschitz_d_1-d_bar} implies
\[
d(z,z_\vep)\leq d(z,z')+d(z',z_\vep)
\leq \frac{1}{a_*}\eta_\vep+\frac{\vep}{2}.
\]
Since we have shown that $V_\vep'\subset V_0'$, it then follows from the arbitrariness of $z\in V_0'\setminus \{x_0\}$ that 
\[
d_H(V_\vep',V_0') \leq \frac{1}{a_*}\eta_\vep+\frac{\vep}{2} \to 0, \quad \text{as $\vep\to 0$.}
\]
Assumptions (A1') and (A2') have now been verified. Applying Theorem~\ref{thm:dom-stability-gen} to this setting, we obtain the following result. 

\begin{prop}\label{prop:lattice-example}
    Let $(\X,d)$ be $\R^2$ with $d$ induced by the $1$-norm $\|\cdot\|_1$. For $\vep>0$ small, let $\X_\vep$ be given by \eqref{X-eps} equipped with the intrinsic metric $d_\vep$ and $f_\vep$ be given by \eqref{periodic_f_vep} for a positive continuous $\Z^2$-periodic function $f$ in $\R^2$.  Let $\bar{d}$ be defined as in \eqref{limit_metric}. Let $\Omega_0\subset\R^2$ be a domain and $\Omega_\vep:=\Omega_0\cap\X_\vep$. Assume $\Omega_\vep$ is a domain for all $\vep>0$ sufficiently small. If $u_\vep$ is a Monge solution of
    \begin{equation*}
        |\nabla_{d_\vep} u_\vep|=f_\vep
        \quad\text{in $\Omega_\vep$},
    \end{equation*}
    and $u_\vep\to u_0$ as $\vep\to 0$ in the sense of \eqref{unif-converge}, then $u_0$ is a Monge solution of    \begin{equation}\label{eq:limit_eikonal}
        |\nabla_{\bar{d}} u_0|=1
        \quad\text{in $\Omega_0$}.
    \end{equation}
\end{prop}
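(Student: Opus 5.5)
The proposition follows by applying Theorem~\ref{thm:dom-stability-gen} to the lattice setting; the preceding discussion has already done most of the work. First I check the structural hypotheses. The ambient space $(\R^2,\|\cdot\|_1)$ is a complete length space. Each $(\X_\vep,d_\vep)$, with the intrinsic metric, is a complete (indeed proper) geodesic space contained in $\R^2$. Since $f$ is continuous, positive and $\Z^2$-periodic, it attains a positive minimum $\alpha$ and a maximum $M$. Hence $f_\vep=f(\cdot/\vep)$ is continuous on $\X_\vep$ and satisfies \eqref{f-vep-bound} uniformly in $\vep$. For the limit space, I take $\X_0=\R^2$ with $\bar d$ from \eqref{limit_metric}. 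This is a norm metric (a weighted $\ell^1$ norm), so straight segments are geodesics, and closed $\bar d$-balls are compact. Thus $(\R^2,\bar d)$ is a proper geodesic space. The compatibility condition ($\bar d(x_j,x)\to0$ implies $d(x_j,x)\to0$) is immediate from the lower bound in \eqref{bi-Lipschitz_d_1-d_bar}.

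Next I invoke the verifications carried out just above. Condition (A1') follows from the uniform estimate \eqref{estimate:L_vep-d_bar} combined with \eqref{bi-Lipschitz_d_1-d_bar}. Condition (A2') follows from the construction of $x_\vep$, $r_\vep=r_0-2C_1\vep$, and the inner approximations $z_\vep$. These give $V_\vep'\subset V_0'\cap\X_\vep\subset\Omega_\vep$ and $d_H(V_\vep',V_0')\to0$. The domain $\Omega_0$ must satisfy $\Omega_0\subsetneq\R^2$ for $V_0'$ to be meaningful in the theorem, and $\Omega_\vep$ is a domain in $\X_\vep$ by assumption. All hypotheses of the first part of Theorem~\ref{thm:dom-stability-gen} therefore hold. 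Its conclusion is exactly that $u_0$ is a Monge solution of $|\nabla_{\bar d}u_0|=1$ in $\Omega_0$, which is \eqref{eq:limit_eikonal}.

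The step I expect to need the most care is the estimate \eqref{estimate:L_vep-d_bar}, in particular the identity \eqref{L_vep_metric} for vertices. Its upper bound is easy: take a monotone staircase path, each unit edge costing $\vep a_1$ or $\vep a_2$ by periodicity. The lower bound needs an argument that any path in $\X_\vep$ between vertices must traverse at least $|h_x-h_y|$ horizontal edges and $|k_x-k_y|$ vertical edges in full. A path may also backtrack partway along an edge. I would handle this by projecting the path onto the coordinate axes and using that every horizontal edge crossing carries cost at least the full integral $\vep a_1$. Here I am assuming the integrals over each edge are translates of $a_1$ and $a_2$, which requires using the edges $\{t\}\times\R$ with $t\in\Z$; periodicity makes these all equivalent. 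Once \eqref{L_vep_metric} holds, the perturbation to non-vertex points costs at most $M\vep$, as already indicated. Everything else is a direct citation of earlier results.
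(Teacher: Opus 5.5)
Your proposal is correct and follows the paper's route. Like the paper, you check the structural hypotheses of Theorem~\ref{thm:dom-stability-gen}: $(\R^2,\bar d)$ is proper and geodesic, compatibility with $d$ comes from \eqref{bi-Lipschitz_d_1-d_bar}, and $f_\vep$ has uniform positive bounds. You then get (A1') and (A2') from \eqref{estimate:L_vep-d_bar} and apply the theorem.

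Your strip-crossing argument for the lower bound in \eqref{L_vep_metric} makes rigorous a step the paper calls easy. Between its last visit to $\{x_1=\vep j\}$ and its first visit to $\{x_1=\vep(j+1)\}$, the path lies in a single open horizontal edge, so it pays at least $\vep a_1$ there. These time intervals are disjoint across strips and from the vertical ones.
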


If $a_1=a_2=a$ for some $a>0$, then the Monge solution $u_0$ of \eqref{eq:limit_eikonal} is also a Monge solution of $|\nabla_d u_0|=a$ in $\Omega_0$. In particular, this example illustrates the stability result in Theorem~\ref{thm:dom-stability} in the special case where $f\equiv 1$ in $\R^2$ and $a_1=a_2=1$.

On the other hand, if $a_1\neq a_2$, then the limiting Hamiltonian in \eqref{eq:limit_eikonal} is anisotropic because the metric $\bar d$ assigns different weights to the horizontal and vertical directions. In this case, the equation cannot be expressed in the form $|\nabla_d u|=f_0$, since there is no $f_0\in C(\Omega_0)$ satisfying \eqref{density}. In fact, \eqref{eq:limit_eikonal} can be written explicitly in Euclidean coordinates as
\[
\max\left\{\frac{1}{a_1}\left|\frac{\partial u}{\partial x_1}(x)\right|,\ \frac{1}{a_2}\left|\frac{\partial u}{\partial x_2}(x)\right|\right\}=1, \quad x=(x_1, x_2)\in \Omega_0.
\]
As seen in \eqref{effective-coefficients}, the homogenized PDE is determined only by the values of $f(\cdot,0)$ and $f(0,\cdot)$, rather than by the full values of $f$. This feature arises from the discrete structure of the approximating spaces and differs from the standard Euclidean case.

This example provides a simple illustration of how to extend the homogenization theory for nonlinear PDEs in Euclidean spaces to general geodesic spaces equipped with an appropriate periodic structure. We refer to the monograph \cite{TrBook} for classical homogenization results for Hamilton-Jacobi equations. Although our setting involves the additional perturbation of the domain $\Omega_0$ by the lattice approximations $\Omega_\vep$, the estimate \eqref{estimate:L_vep-d_bar}, together with the mesh size $\vep$, suggests that the homogenization error is of order $O(\vep)$ for the associated Dirichlet problems. This is consistent with the connection between homogenization and stable norms in metric geometry pointed out in a recent work by Tran and Yu \cite{TrYu}. Through personal communications with Nakayasu, we also learned about his partial results on the homogenization of Hamilton-Jacobi equations on fractals using a similar space-approximation approach. Related results have been announced at several seminars, including, for instance, \cite{Na}.


\section{Perron's method for Monge solutions}\label{sec:perron}

\subsection{Eikonal equations}

The characterization result in Theorem~\ref{thm:char} enables us to further establish Perron's method for Monge solutions to eikonal equations \eqref{eikonal}, as stated in Theorem~\ref{perron:gen_eikonal}.  

\begin{prop}[Pointwise extrema of subsolutions]\label{prop:gen-eikonal1}
    Suppose that $(\X, d)$ is a complete length space and $\Omega\subsetneq\X$ is a domain. Assume that $f\in C(\Omega)$ is nonnegative. Let $\mathcal{U}$ be a nonempty collection of Monge subsolutions of \eqref{eikonal}. Suppose that 
    \begin{equation}\label{point-sup}
        w_1(x):=\sup\{u(x): u\in\mathcal{U}\}<\infty \quad \text{for each $x\in \Omega$,}
    \end{equation}
    \begin{equation}\label{point-inf}
        w_2(x):=\inf\{u(x): u\in\mathcal{U}\}>-\infty \quad \text{for each $x\in \Omega$.}
    \end{equation}
    Then, $w_1$ and $w_2$ are Monge subsolutions of \eqref{eikonal}. 
\end{prop}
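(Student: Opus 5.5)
The proof will go through a local, nonlocal reformulation of the subsolution property for a general continuous $f\ge 0$, in the spirit of Theorem~\ref{thm:char}(i) and its remark. \textbf{Claim:} $u\in\lipl(\Omega)$ is a Monge subsolution of \eqref{eikonal} if and only if, for every $x_0\in\Omega$ and every $\eta>0$, there is $\delta>0$ with $\ol{B_{\delta}(x_0)}\subset\Omega$ such that
\[
u(x)-u(y)\le \bigl(f(x_0)+\eta\bigr)\, d(x,y)\qquad\text{for all } x,y\in B_\delta(x_0).
\]

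The direction ``$\Leftarrow$'' is immediate: take $x=x_0$, let $y\to x_0$, and then let $\eta\to0$. For ``$\Rightarrow$'', continuity of $f$ gives $\delta$ with $f\le f(x_0)+\eta/2$ on $B_{3\delta}(x_0)$. Take $x,y\in B_\delta(x_0)$. The length structure gives, for any small $\kappa>0$, an arc-length curve $\gamma$ from $x$ to $y$ of length $\ell\le d(x,y)+\kappa<2\delta+\kappa$, and this curve stays inside $B_{3\delta}(x_0)$. Suppose $u(x)-u(y)> c\,\ell$ with $c=f(x_0)+\eta$. Run the maximization argument of Theorem~\ref{thm:char}(i) on
\[
\phi(t)=u(\gamma(t))+\frac{u(x)-u(y)}{\ell}\,t .
\]
This produces a point $\hat x$ on $\gamma$ with $|\nabla^-u|(\hat x)\ge c>f(\hat x)$, a contradiction. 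Letting $\kappa\to0$ yields the claim. (A degenerate case is $\hat t$ lying at the endpoint; there $\phi(\hat t)=\phi(0)=\phi(\ell)$, and we may take $\hat t\in[0,\ell)$ exactly as in the paper.)

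\textbf{Case $w_1$.} Fix $x_0$ and $\eta$, and take $\delta$ from the claim, using that this $\delta$ depends only on $f$, $x_0$ and $\eta$, not on $u$. Then for every $u\in\U$ and all $x,y\in B_\delta(x_0)$,
\[
u(x)\le u(y)+c\,d(x,y)\le w_1(y)+c\,d(x,y).
\]
Taking the supremum over $u$ gives $w_1(x)-w_1(y)\le c\,d(x,y)$, and by symmetry $|w_1(x)-w_1(y)|\le c\,d(x,y)$. Finiteness of $w_1$ from \eqref{point-sup} is what makes these differences meaningful. Consequently $w_1\in\lipl(\Omega)$, and by ``$\Leftarrow$'' $w_1$ is a Monge subsolution.

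\textbf{Case $w_2$.} This is symmetric. For each $u\in\U$,
\[
w_2(x)\le u(x)\le u(y)+c\,d(x,y),
\]
and taking the infimum over $u$ yields $w_2(x)\le w_2(y)+c\,d(x,y)$, using \eqref{point-inf}. The rest is as for $w_1$.

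The main obstacle is making the local constant uniform over the family $\U$. The point is that the claim's $\delta$ comes only from continuity of $f$ and the geometry, not from any Lipschitz bound of the individual $u$. A secondary delicate point is keeping the near-geodesic inside the ball where $f$ is controlled, which the $3\delta$ choice handles.
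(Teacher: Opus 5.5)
Your proof is correct, but it handles general $f$ by a different route from the paper's.

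\textbf{The paper's route.} It first treats $f\equiv 1$ through the sphere characterization: Theorem~\ref{thm:char}(i) gives, for every $u\in\U$, the $u$-independent bound $|u(x)-u(y)|\le d(x,y)$ on $B_{r/4}(x_0)$ and $S_r^-u(x_0)\le 1$, and both pass to $w_1,w_2$. For a general nonnegative $f$ it then changes the metric. It replaces $f$ by $f+\vep$ on a ball and extends this by Tietze to a bounded function on $\X$ with positive infimum. It rewrites the subsolution property as $|\nabla u|=1$ with respect to the optical metric $L_{f_\vep}$ and applies the constant case. Converting back gives $|\nabla^-w_i|(x_0)\le f(x_0)+\vep$, and it lets $\vep\to0$.

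\textbf{Your route.} You freeze the coefficient instead. On a ball where $f\le f(x_0)+\eta/2$, you rerun the curve-maximization argument from the proof of Theorem~\ref{thm:char}(i). This gives the $u$-independent estimate $u(x)-u(y)\le (f(x_0)+\eta)\,d(x,y)$ on $B_\delta(x_0)$, which then passes to suprema and infima exactly as in the paper's Step~1. Two small points should be made explicit. First, $\delta$ must also be chosen so that $B_{3\delta}(x_0)\subset\Omega$, since $f$ and $u$ live only on $\Omega$. Second, the bound $d(\gamma(t),x_0)\le \ell/2+\delta$ keeps $\gamma$ inside $B_{3\delta}(x_0)$ once $\kappa<2\delta$.

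\textbf{What each buys.} Your argument is shorter and more elementary. It needs no Tietze extension and no facts about $L_{f_\vep}$: neither its length structure, nor its bi-Lipschitz equivalence with $d$, nor the infinitesimal ratio $L_{f_\vep}(x,y)/d(x,y)\to f_\vep(x)$ used to translate subslopes. It also needs no $\vep$-shift to deal with zeros of $f$. The explicit $3\delta$ margin makes it transparent that the near-geodesics stay where $u$ is a subsolution and $f$ is controlled, a point the sphere argument leaves implicit. Your argument is also the subsolution analogue of the coefficient freezing the paper itself uses for supersolutions in Proposition~\ref{prop:gen-eikonal2}. The paper's route keeps everything inside the sphere-characterization and optical-metric framework used throughout. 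Its intermediate conclusion is also sharper: $w_1,w_2$ are local subsolutions of $|\nabla u|=1$ for $L_{f_\vep}$, so $w_i(x)-w_i(y)\le L_{f_\vep}(x,y)$ for nearby $x,y$. Your route only gives a Lipschitz bound with the frozen constant $f(x_0)+\eta$.
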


\begin{proof} We divide our proof into two steps. The first step is for the case $f\equiv 1$, and the second is to discuss the case of a general $f$.  

\ul{\emph{Step 1.}} We prove all the statements for \eqref{eikonal1}. In this case, the proof is similar to that of Proposition~\ref{prop:sub}. Based on the proof of \eqref{loc-lip0}, one can show that $w_1,w_2\in \lipl(\Omega)$. Indeed, for any $u\in \U$ and any fixed $x_0\in\Omega$, $r>0$ such that $\ol{B_{r}(x_0)}\subset\Omega$, we apply Theorem~\ref{thm:char}(i) to obtain 
\begin{equation}\label{perron eq1}
    S^-_\rho u(x)\leq 1 \quad \text{for every $\rho\in (0, r]$} 
\end{equation}
such that $\ol{B_{\rho}(x)}\subseteq \ol{B_{r}(x_0)}$. In particular, we have $u(x)\leq u(y)+d(x, y)$ for all $x, y\in B_{r/4}(x_0)$ since $\ol{B_{r/2}(x)}\subset\ol{B_{r}(x_0)}\subset\Omega$. Similarly, we have $u(y)\leq u(x)+d(x, y)$. It follows that 
\[
|u(x)-u(y)|\leq d(x, y)\quad \text{ for all $x, y\in B_{r/4}(x_0)$.}
\]
Taking the supremum or infimum over all $u\in \U$, we obtain the same inequality for $w_1$ and $w_2$ in $B_{r/4}(x_0)$, which implies the local Lipschitz regularity of $w_1$ and $w_2$.  

Moreover, for every $u\in \mathcal{U}$, \eqref{perron eq1} also implies that
\begin{equation}\label{sup_subs}
    u(x_0)\leq u(y)+r\leq w_1(y)+r\quad \text{for all}\ y\in \partial B_r(x_0) 
\end{equation}
and
\begin{equation}\label{inf_subs}
    w_2(x_0)-r\leq u(x_0)-r\leq u(y)\quad \text{for all}\ y\in \partial B_r(x_0). 
\end{equation}
Taking the supremum over all such $u\in \mathcal{U}$ in \eqref{sup_subs} and the infimum over all such $u\in \mathcal{U}$ in \eqref{inf_subs} then yields
\[
w_1(x_0)\leq w_1(y)+r\qquad \text{and}\qquad w_2(x_0)\leq w_2(y)+r 
\]
which implies $S^-_rw_1(x_0)\leq 1$ and $S^-_rw_2(x_0)\leq 1$. Hence, we conclude that $w_1$ and $w_2$ are Monge subsolutions of \eqref{eikonal1}, thanks to Theorem~\ref{thm:char} again.

\ul{\emph{Step 2.}} We now study \eqref{eikonal} for general nonnegative $f\in C(\Omega)$. Fix $x_0\in\Omega$ arbitrarily and choose $r>0$ such that $f$ is bounded on $\ol{B_r(x_0)}\subset\Omega$. For any sufficiently small $\vep>0$, define
\[
f_\vep:=f+\vep
\quad\text{on}\ \ol{B_r(x_0)}.
\]
Then $\inf_{\ol{B_r(x_0)}} f_\vep\geq\vep>0.$ By the Tietze extension theorem, we may extend $f_\vep$ continuously to the whole space $\X$, still denoted by $f_\vep$, such that it is bounded and satisfies $\inf_{\X} f_\vep\geq\frac{\vep}{2}.$ We can therefore define the optical length metric $L_{f_\vep}$
as in \eqref{optical-length-eps}.

As in the proof of Theorem
\ref{thm:dom-stability-gen}, we can show that every $u\in\mathcal{U}$ is a Monge subsolution of \eqref{eikonal1} with respect to $L_{f_\vep}$ in $B_r(x_0)$. Hence, by the results obtained in Step 1, $w_1$ and $w_2$ are also Monge subsolutions of \eqref{eikonal1} with respect to $L_{f_\vep}$ in $B_r(x_0)$. Since $f_\vep$ is bounded and continuous,  $w_1$ and $w_2$ are locally Lipschitz in $B_r(x_0)$ with respect to the metric $d$, and
\[
|\nabla^-w_1|(x_0),|\nabla^-w_2|(x_0)
\leq f_\vep(x_0)=f(x_0)+\vep.
\]
Letting $\vep\to 0$, we obtain $|\nabla^-w_1|(x_0),|\nabla^-w_2|(x_0)\leq f(x_0)$. By the arbitrariness of $x_0\in\Omega$, we see that $w_1$ and $w_2$ are Monge subsolutions of \eqref{eikonal} in $\Omega$.
\end{proof}

\begin{prop}[Pointwise infimum of supersolutions]\label{prop:gen-eikonal2}
    Suppose that $(\X, d)$ is a complete length space and $\Omega\subsetneq\X$ is a domain. Assume that $f\in C(\Omega)$ is nonnegative. Let $\mathcal{U}$ be a nonempty collection of Monge supersolutions to \eqref{eikonal} such that 
    \begin{equation*}
        w_3(x):=\inf\{u(x): u\in\mathcal{U}\}>-\infty \quad \text{for each $x\in \Omega$.}
    \end{equation*}
    If $w_3\in\lipl(\Omega)$, then $w_3$ is a Monge supersolution of \eqref{eikonal}.
\end{prop}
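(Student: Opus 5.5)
The plan is to follow the two-step scheme of the proof of Proposition~\ref{prop:gen-eikonal1}. First I treat $f\equiv 1$ using the sphere characterization of Theorem~\ref{thm:char}. Then I reduce a general $f$ to that case by the optical-metric change.

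\emph{Step 1 ($f\equiv1$).} Fix $x_0\in\Omega$. Since $w_3\in\lipl(\Omega)$, there is $\delta>0$ with $\ol{B_\delta(x_0)}\subset\Omega$ on which $w_3$ is Lipschitz, and hence bounded. Every $u\in\U$ satisfies $u\ge w_3$, so every $u\in\U$ is bounded from below on $B_\delta(x_0)$, uniformly in $u$. This is the point where the assumption $w_3\in\lipl(\Omega)$ is used: it supplies the lower bound required in Theorem~\ref{thm:char}(ii).

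I then apply Theorem~\ref{thm:char}(ii) in the domain $\Omega':=B_\delta(x_0)$, which is connected because balls in a length space are path connected. This gives $S_r^-u(x_0)\ge1$ for every $u\in\U$ and every small $r>0$ with $\ol{B_r(x_0)}\subset\Omega'$.

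Now fix such an $r$ and let $\eta>0$. Choose $u\in\U$ with $u(x_0)\le w_3(x_0)+\eta$. Since $S_r^-u(x_0)\ge 1$, there is $y\in\partial B_r(x_0)$ with
\[
u(y)\le u(x_0)-(1-\eta)r.
\]
Using $w_3(y)\le u(y)$, this gives
\[
w_3(x_0)-w_3(y)\ge (1-\eta)r-\eta .
\]
Hence $S_r^-w_3(x_0)\ge 1-\eta-\eta/r$. Letting $\eta\to0$ yields $S_r^-w_3(x_0)\ge1$ for all small $r$.

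The ``$\Leftarrow$'' argument in Theorem~\ref{thm:char}(ii), namely inequality \eqref{char-eq2}, uses only small radii and no boundedness. It therefore gives $|\nabla^-w_3|(x_0)\ge1$.

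\emph{Step 2 (general $f\ge0$).} If $f(x_0)=0$ there is nothing to prove. Otherwise, fix $\vep\in(0,f(x_0))$ and choose $r>0$ so small that $f>\vep$ and $f$ is bounded on $\ol{B_r(x_0)}\subset\Omega$. Set $f_\vep:=f-\vep$ there, and extend it by Tietze's theorem to a bounded continuous function on $\X$ with a positive lower bound.

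Because $f_\vep$ is continuous, for $x$ near $x_0$ we have $L_{f_\vep}(x,y)/d(x,y)\to f_\vep(x)$ as $y\to x$. Consequently
\[
|\nabla^-_{L_{f_\vep}}u|(x)=|\nabla^-u|(x)/f_\vep(x)\ge f(x)/f_\vep(x)>1
\]
for each $u\in\U$ and $x$ near $x_0$. Thus each $u\in\U$ is a Monge supersolution of $|\nabla u|=1$ with respect to $L_{f_\vep}$ in a small $L_{f_\vep}$-ball around $x_0$, and that ball lies inside $B_r(x_0)$ by bi-Lipschitz equivalence. Moreover, $w_3$ is still locally Lipschitz for $L_{f_\vep}$.

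Step 1, applied in $(\X,L_{f_\vep})$, then gives $|\nabla^-_{L_{f_\vep}}w_3|(x_0)\ge1$. Converting back to $d$, this reads $|\nabla^-w_3|(x_0)\ge f_\vep(x_0)=f(x_0)-\vep$. Sending $\vep\to0$ completes the argument.

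I expect the main obstacle to be the locality in Step 2. I must make sure that the optical metric near $x_0$ is not affected by the arbitrary extension of $f_\vep$ far away. The point is that curves leaving $B_r(x_0)$ cost at least about $\tfrac{\vep}{2}\,r$, which exceeds the $L_{f_\vep}$-distance between points sufficiently close to $x_0$. I must also make sure that Theorem~\ref{thm:char} is applied on a subdomain that is a genuine domain, with closed balls contained in it.
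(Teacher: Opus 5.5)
Your argument is correct, and its core (Step~1) coincides with the paper's. You apply Theorem~\ref{thm:char}(ii) to each $u\in\U$ on a small ball around $x_0$. You then transfer $S_r^-u(x_0)\ge 1$ to $w_3$ using $w_3\le u$ on the sphere and an infimum over $u(x_0)$. The paper does this step without $\eta$: from $u(x_0)\ge c\rho+\inf_{\partial B_\rho(x_0)}w_3$ for every $u$, it takes the infimum directly. Finally you conclude with the easy direction \eqref{char-eq2}. You also make explicit a point the paper leaves implicit: $w_3\in\lipl(\Omega)$ is what supplies the uniform lower bound on small balls needed in Theorem~\ref{thm:char}(ii).

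The real difference is how variable $f$ is handled. The paper never changes the metric. It takes a constant $c\in(|\nabla^-w_3|(x_0),f(x_0))$ and shrinks the ball so that $f\ge c$ there. Every $u\in\U$ is then a Monge supersolution of $|\nabla u|=c$ on that ball, and the scaling remark $|\nabla^-(u/c)|=|\nabla^-u|/c$ lets the paper apply the sphere characterization at level $c$. This works because a supersolution inequality only needs a lower bound on $f$ near $x_0$, so freezing $f$ at a constant loses nothing as $c\uparrow f(x_0)$.

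Your optical-metric reduction with $f-\vep$ reaches the same conclusion but costs more. You need a Tietze extension, and you must check that $(\X,L_{f_\vep})$ is a complete length space with $L_{f_\vep}(x,y)/d(x,y)\to f_\vep(x)$. What it buys is only uniformity with Step~2 of Proposition~\ref{prop:gen-eikonal1}.

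Two small points. First, pointwise $f>\vep$ on $\ol{B_r(x_0)}$ does not give $\inf(f-\vep)>0$ on a closed ball in a non-proper space. You should choose $r$ by continuity at $x_0$ so that, say, $f\ge (f(x_0)+\vep)/2$ there; otherwise the extension cannot have a positive lower bound. Second, the locality issue you flag is harmless. $f_\vep$ enters only through the infinitesimal ratio $L_{f_\vep}/d$ at points of $B_r(x_0)$, and that ratio depends only on $f_\vep$ near those points.
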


\begin{proof}
Suppose, to the contrary, that $w_3$ is not a Monge supersolution of \eqref{eikonal}. Then there exists $x_0\in\Omega$ such that
\begin{equation}\label{cont_1_eik_gen}
    0\leq |\nabla^-w_3|(x_0)<c
\end{equation}
for some $c<f(x_0)$. By the continuity of $f$, we may choose $r>0$ arbitrarily small such that $\ol{B_r(x_0)}\subset\Omega$ and $f(x)\geq c$ for all $x\in B_r(x_0)$. Since every $u\in\mathcal{U}$ is a Monge supersolution of \eqref{eikonal}, it follows that
\[
|\nabla^-u|(x)\geq f(x)\geq c
\quad\text{for all }x\in B_r(x_0).
\]
In other words, each $u\in\mathcal{U}$ is a Monge supersolution of \eqref{eikonal_c} on $B_r(x_0)$. 

By Theorem~\ref{thm:char}(ii) applied to \eqref{eikonal_c} with a general constant $c>0$, we see that each $u\in \U$ satisfies 
\[
S_\rho^-u(x_0)=\sup_{y\in \partial B_\rho(x_0)} \frac{u(x_0)-u(y)}{\rho}\geq c.
\]
for $0<\rho<r$. Since $w_3\leq u$ in $\Omega$, it follows that 
\[
\sup_{y\in \partial B_\rho(x_0)} \frac{u(x_0)-w_3(y)}{\rho}\geq c.
\]
Taking the infimum of $u(x_0)$ over all $u\in \U$, we are led to $S_\rho^- w_3(x_0)\geq c$. It follows from Theorem~\ref{thm:char}(ii) (again for \eqref{eikonal_c}) that $|\nabla^-w_3|(x_0)\geq c$, which contradicts \eqref{cont_1_eik_gen}. Hence, $w_3$ is a Monge supersolution of \eqref{eikonal}.
\end{proof}

In Proposition~\ref{prop:gen-eikonal2}, we assume that $w_3$ is locally Lipschitz in $\Omega$. Without this assumption, the conclusion may fail, since the pointwise infimum of locally Lipschitz functions is not necessarily locally Lipschitz, as illustrated by the following example.

\begin{example}
    Let $(\X,d)=(\R^2,\|\cdot\|_2)$ and $\Omega=(0,1)\times (-1, 1)$. For $n\in\mathbb{N}$, we define
    \begin{equation*}
    u_n(x,y):=x+e^{-ny^2}. 
    \end{equation*}
    Note that $u_n\in C^\infty(\Omega)$ and one can calculate that
    \[ |\nabla^-u_n(x,y)|=|\nabla u_n(x,y)|=\sqrt{1+(2nye^{-ny^2})^2}\geq 1\qquad (x,y)\in\Omega. \]
    Thus, every $u_n$ is a Monge supersolution of \eqref{eikonal1} on $\Omega$. On the other hand, their pointwise infimum, given by 
    \[ 
    \inf_{n\in \mathbb{N}}u_n(x,y)=\begin{cases}
        x+1, & \text{if $y=0$,}\\
        x, & \text{if $y\neq 0$,}
    \end{cases} 
    \]
    is not a Monge supersolution of \eqref{eikonal1}, since it is not even continuous in $\Omega$. This example explains why the assumption $w_3\in\lipl(\Omega)$ is needed in Proposition~\ref{prop:gen-eikonal2} to ensure that $w_3$ is a Monge supersolution.
\end{example}

\begin{example}
    Another known example concerning the instability of metric viscosity supersolutions can be found in \cite[Example~2.1]{S} and \cite[Example~5.5]{GHN}, where $\X=\ell^2$, the space of square-summable sequences, and the stability of \eqref{eikonal1} is considered for
    \begin{equation}\label{example-dom}
    \Omega=\{x=(x_1, x_2, \ldots)\in \ell^2: x_i>0 \ \text{for all $i\geq 1$}\}.
    \end{equation}
    It was shown that 
    \[
    u_n(x)=x_n \quad \text{for $x=(x_1, x_2, \ldots)\in \Omega$}
    \]
    are metric viscosity solutions (equivalently, Monge solutions) to \eqref{eikonal1} for all $n\geq 1$, but the pointwise infimum $u\equiv 0$ is not a supersolution. 

    While this example may appear to contradict Proposition~\ref{prop:gen-eikonal2}, the key point is that $\Omega$ given in \eqref{example-dom} is not an open set in $\ell^2$. Note that, for any $x=(x_1, \ldots, x_{n-1}, x_n, x_{n+1},\ldots)\in\Omega$, we have $y_n:=(x_1, \ldots, x_{n-1}, 0, x_{n+1}, \ldots)\notin \Omega$ but $\|x-y_n\|_{\ell^2}=x_n$ is arbitrarily small when $n\geq 1$ is sufficiently large. It follows that $B_r(x)\cap \Omega^c\neq \emptyset$ for any $x\in \Omega$ and any $r>0$ small. In other words, $\Omega$ has empty interior, and therefore Proposition~\ref{prop:gen-eikonal2} does not apply to this example. 
\end{example}

\begin{thm}[Pointwise infimum of solutions]\label{thm:eikonal_gen_sol}
    Suppose that $(\X, d)$ is a complete length space and $\Omega\subsetneq\X$ is a domain. Assume that $f\in C(\Omega)$ is nonnegative. Let $\mathcal{U}$ be a nonempty collection of Monge solutions to \eqref{eikonal} such that 
    \[
    w(x):=\inf\{u(x): u\in\mathcal{U}\}>-\infty \quad \text{for each $x\in \Omega$.}
    \]
    Then, $w$ is a Monge solution of \eqref{eikonal}.
\end{thm}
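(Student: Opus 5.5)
The plan is to combine the two preceding propositions. Every $u\in\U$ is a Monge solution, and in particular a Monge subsolution of \eqref{eikonal}. Since $w>-\infty$ pointwise, Proposition~\ref{prop:gen-eikonal1} applies to $\U$ and gives that $w=w_2$ is a Monge subsolution of \eqref{eikonal}. This includes the statement $w\in\lipl(\Omega)$, which is exactly the regularity hypothesis missing for the supersolution part. For the supersolution property we then apply Proposition~\ref{prop:gen-eikonal2} to the same family $\U$, now viewed as Monge supersolutions. Its hypotheses are that $w_3=w>-\infty$ and that $w\in\lipl(\Omega)$, and both are now available, so it yields that $w$ is a Monge supersolution. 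The two properties together give that $w$ is a Monge solution.

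The only point to check carefully is that the local Lipschitz regularity really is delivered by Proposition~\ref{prop:gen-eikonal1} for a general nonnegative continuous $f$, and not only for $f\equiv1$. In Step~1 of that proof, the uniform local bound $|u(x)-u(y)|\le d(x,y)$ on $B_{r/4}(x_0)$ holds for all $u\in\U$ at once, and it passes to the infimum. In Step~2 the same reasoning is carried out in the metric $L_{f_\vep}$, which is bi-Lipschitz to $d$ on the relevant ball. This gives the uniform bound $|u(x)-u(y)|\le L_{f_\vep}(x,y)\le C\,d(x,y)$ near $x_0$, with $C$ a local bound of $f+\vep$, so the infimum is locally Lipschitz in $d$.

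I expect this regularity step to be the only real obstacle. It is precisely the point where solutions behave better than general supersolutions, as the preceding counterexample with $u_n(x,y)=x+e^{-ny^2}$ shows: there the infimum of supersolutions fails even to be continuous. If one prefers not to rely on the internal structure of that proof, the bound can be rederived directly in a self-contained way. By Theorem~\ref{thm:char}(i) in the metric $L_{f_\vep}$, every $u\in\U$ is $1$-Lipschitz with respect to $L_{f_\vep}$ on a small ball, uniformly in $u$, so the pointwise infimum inherits the same Lipschitz bound.
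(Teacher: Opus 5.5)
Your proposal is correct and follows the paper's proof exactly. Proposition~\ref{prop:gen-eikonal1} supplies both $w\in\lipl(\Omega)$ and the subsolution property, and Proposition~\ref{prop:gen-eikonal2} then gives the supersolution property; your extra check that the uniform local Lipschitz bound survives Step~2 (via the bi-Lipschitz equivalence of $L_{f_\vep}$ and $d$) is accurate and only makes explicit what that proposition's proof already contains.
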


\begin{proof}
By Proposition~\ref{prop:gen-eikonal1}, $w\in\lipl(\Omega)$ and is a Monge subsolution of \eqref{eikonal}. The Monge supersolution property of $w$ is a direct consequence of Proposition~\ref{prop:gen-eikonal2}.
\end{proof}

\begin{proof}[Proof of Theorem \ref{perron:gen_eikonal}]
Let us first show that $\ol{w}$ is a Monge solution of \eqref{eikonal}. It is clear from Proposition \ref{prop:gen-eikonal1} that \(\ol{w}\leq u_+\), \(\ol{w}\in\lipl(\Omega)\), and \(\ol{w}\) is a Monge subsolution of \eqref{eikonal}. It remains to prove that \(\ol{w}\) is also a Monge supersolution.

Suppose, by contradiction, that there exists \(x_0\in\Omega\) such that
\[ 
q:=|\nabla^-\ol{w}|(x_0)<f(x_0). 
\]
Let $p_0\in (q, f(x_0))$. By the continuity of $f$, one can take $r_0>0$ sufficiently small such that $\ol{B_{r_0}(x_0)}\subset\Omega$ and $p_0<\inf_{B_{r_0}(x_0)}f$. Moreover, for any $\eta\in (0, p_0-q)$, by letting $r_0>0$ further small, we have 
\[
\ol{w}(x_0)\leq \ol{w}(y)+(p_0-\eta)d(x_0,y) \quad \text{for all  $y\in B_{r_0}(x_0)$.}
\]
If $\ol{w}(x_0)=u_+(x_0)$, then using the fact that $\ol{w}\leq u_+$ in $\Omega$, we have
\[ 
u_+(x_0)=\ol{w}(x_0)\leq \ol{w}(y)+(p_0-\eta)d(x_0,y)\leq u_+(y)+(p_0-\eta)d(x_0,y) 
\]
for all $y\in B_{r_0}(x_0)$. This immediately implies $|\nabla^- u_+|(x_0)\leq p_0-\eta<f(x_0)$, which is a contradiction to the supersolution property of $u_+$. 

Let us now focus on the case when $\ol{w}(x_0)<u_+(x_0)$. By the continuity of $\ol{w}$ and $u_+$, we can choose $r\in (0, r_0)$ small such that 
\[
\sup_{\ol{B_{r}(x_0)}}\ol{w}+\frac{\eta r}{2}\leq \inf_{\ol{B_{r}(x_0)}}u_+.
\]
Define 
\begin{equation}\label{perron eq4-1}
    \psi(x):=\ol{w}(x_0)+\frac{\eta r}{2}-p_0d(x,x_0)\qquad\text{on}\ \ol{B_{r}(x_0)}. 
\end{equation}
One can easily verify that $\psi\leq u_+$, $|\nabla^-\psi|\leq p_0<f(x_0)$ on $B_r(x_0)$, 
\begin{equation}\label{strict-bump}
    \psi(x_0)= \ol{w}(x_0) +\eta r/2>\ol{w}(x_0),
\end{equation}
and
\begin{equation}\label{perron eq4}
    \psi \leq \ol{w}-\frac{\eta r}{2} \quad \text{on $\partial B_r(x_0)$. }
\end{equation}
We then define
\begin{equation}\label{perron eq5}
    \tilde{w}(x)=\begin{cases}
    \ \max\{\ol{w}(x),\psi(x)\}& \quad \text{if }x\in B_{r}(x_0), \\
    \ \ol{w}(x) &\quad \text{if } x\in \Omega\backslash B_{r}(x_0).
    \end{cases}
\end{equation}
It is clear that $u_-\leq \tilde{w}\leq u_+$ in $\Omega$ and $\tilde{w}\in \lipl(\Omega)$. Our construction of $\tilde{w}$ yields $|\nabla^-\tilde{w}|\leq f$ in $B_r(x_0)$ and 
\begin{equation}\label{bump-key}
    |\nabla^-\tilde{w}|(x)\leq p_0<\inf_{B_{r}(x_0)}f\quad \text{when $\tilde{w}(x)>\ol{w}(x)$.}
\end{equation}
Also, by \eqref{perron eq4-1} and \eqref{perron eq4}, we have $\tilde{w}=\ol{w}$ in $\Omega\setminus B_{r'}(x_0)$ for some $r'<r$, and therefore $|\nabla^-\tilde{w}|\leq f$ in $\Omega$. In other words, $\tilde{w}$ is a Monge subsolution of \eqref{eikonal}. In view of \eqref{strict-bump}, this contradicts the maximality of $\ol{w}$. Hence, $\ol{w}$ must be a Monge supersolution of \eqref{eikonal}. 

We next prove that $\ul{w}$ given in \eqref{Perron_sol_min} is also a Monge solution of \eqref{eikonal}. Define
\begin{equation}\label{sol-class}
    \begin{aligned}
    \ol{\S}&:=\left\{v\in \lipl(\Omega): v \text{ is a Monge supersolution of \eqref{eikonal} and } u_-\leq v\leq u_+ \text{ in $\Omega$}\right\}\\
    \S&:=\left\{v\in \lipl(\Omega): v \text{ is a Monge solution of \eqref{eikonal} and }
    u_-\leq v\leq u_+\text{ in $\Omega$}\right\}.
    \end{aligned}
\end{equation}
Since $\ol{w}$ is a Monge solution of \eqref{eikonal} such that $u_-\leq \ol{w}\leq u_+$ in $\Omega$, we have $\S\neq\emptyset$. We now claim that 
\begin{equation}\label{perron eq6}
    \ul{w}(x)=\inf_{v\in\ol{\S}}v(x)
    =\inf_{v\in\S}v(x)
    \qquad\text{for every }x\in\Om.
\end{equation}
Indeed, it is clear that $\S\subset\ol{\S}$ and therefore
\begin{equation}\label{perron eq7}
    \inf_{v\in\ol{\S}}v(x)
    \le \inf_{v\in\S}v(x).
\end{equation}
Conversely, for any fixed $v_0\in\ol{\S}$, applying our Perron-type result for $\ol{w}$ in \eqref{Perron_sol_max} with $u_+$ replaced by $v_0$, we obtain a Monge solution $w_0$ of \eqref{eikonal} satisfying $u_-\le w_0\le v_0\le u_+$ in $\Omega$. Hence, $w_0\in\S$, and consequently
\[
\inf_{v\in\S} v(x)\le w_0(x)\le v_0(x).
\]
Taking the infimum over all such $v_0\in\ol{\S}$ yields
\begin{equation}\label{perron eq8}
    \inf_{v\in\S}v(x)
    \le \inf_{v\in\ol{\S}}v(x).
\end{equation}
Combining \eqref{perron eq7} and \eqref{perron eq8}, we complete the proof of the claim  \eqref{perron eq6}.

Adopting Theorem~\ref{thm:eikonal_gen_sol} for $\inf_{v\in \S} v$, we see that $\ul{w}$ is a Monge solution of \eqref{eikonal}. By definition, it is also clear that $\ul{w}\leq \ol{w}$ in $\Omega$. Our proof is now complete.
\end{proof}

\subsection{More general Hamilton-Jacobi equations}

For more general Hamilton-Jacobi equations in the form of \eqref{general eq}, recall the definitions of Monge solutions (resp., Monge subsolutions, Monge supersolutions) in \eqref{Monge general}. Our strategy is to convert the equation to the eikonal type under (H1) along with the following conditions on $H\in C(\X\times\R\times[0,\infty))$. 
\begin{enumerate}
    \item[(H2)] For every $R>0$,
    \begin{equation*} 
        \inf_{(x,r)\in\X\times[-R,R]} H(x,r,p)\to \infty\quad \text{as $p\to\infty$}.
    \end{equation*}
    \item[(H3)] For any $(x,p)\in\X\times[0,\infty)$ and $r_1, r_2\in \R$,
    \begin{equation*}
        H(x,r_1,p)\geq H(x,r_2,p)\quad \text{if $ r_1\geq r_2$}. 
    \end{equation*}
\end{enumerate}
One may restrict the domain of $H$ as well as the assumptions (H1)--(H3) to $\Omega\times\R\times[0,\infty)$ without affecting our results below. Define
\begin{equation}\label{def-h}
    h(x, r)=\inf\{p\geq 0: H(x, r, p)\geq 0\} \quad \text{for $x\in \Omega$ and $r\in \R$.}
\end{equation}
Note that the condition (H2) implies $h(x, r)<\infty$ for any $(x, r)\in \Omega\times\R$. In addition, by (H3), we deduce 
\begin{equation}\label{h_mono}
    r\mapsto h(x, r) \quad \text{is nonincreasing in $\R$ for each $x\in \Omega$.}
\end{equation}
By (H1) and the continuity of $H$, we also obtain the continuity of $h$. 
Moreover, if for each $(x,r)\in\X\times\R$ there exists $p\geq 0$ satisfying $H(x,r,p)=0$, then such $p$ is unique and $h(x, r)=p$. It is also clear that $h(x, r)=0$ if $H(x, r, 0)\geq 0$. Using this function $h$, we can formally rewrite \eqref{general eq} as an eikonal equation in the form 
\begin{equation}\label{general loc_u}
    |\nabla u|(x)=h(x, u(x))\quad \text{for $x\in \Omega$}.
\end{equation} 
One can prove the following equivalence between these two formulations. 

\begin{lem}[Equivalent formulations]\label{lem implicit}
    Suppose that $(\X, d)$ is a complete length space and $\Omega\subsetneq\X$ is a domain. Assume that $H\in C(\X\times \R\times [0, \infty))$ satisfies (H1)(H2). Let $h$ be defined as in \eqref{def-h}. Then, the results below hold. 
    \begin{itemize}
        \item[(i)] $u$ is a Monge supersolution of \eqref{general eq}  if and only if $u$ is a Monge supersolution of \eqref{general loc_u}.
        \item[(ii)] If $u$ is a Monge subsolution of \eqref{general eq}, then $u$ is a Monge subsolution of \eqref{general loc_u}.
        \item[(iii)] Assume in addition that $h(x, u(x))>0$ for all $x\in \Omega$. If  $u$ is a Monge subsolution of \eqref{general loc_u}, then $u$ is a Monge subsolution of \eqref{general eq}.
    \end{itemize}
    In particular, when $h(x, u(x))>0$ for all $x\in \Omega$, $u$ is a Monge solution of \eqref{general eq}  if and only if $u$ is a Monge solution of \eqref{general loc_u}.
\end{lem}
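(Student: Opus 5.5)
The plan is to argue pointwise. Fix $x\in\Omega$ and set $p:=|\nabla^-u|(x)\in[0,\infty)$, which is finite since $u\in\lipl(\Omega)$, and $r:=u(x)$. By \eqref{Monge general}, the Monge conditions for \eqref{general eq} at $x$ read $H(x,r,p)\le 0$ or $\ge 0$. The conditions for \eqref{general loc_u} read $p\le h(x,r)$ or $p\ge h(x,r)$. So the lemma reduces to comparing, for fixed $(x,r)$, the sign of $H(x,r,p)$ with the order relation between $p$ and $h(x,r)$. This comparison is a statement about the strictly increasing continuous function $g(p):=H(x,r,p)$ on $[0,\infty)$. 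By (H2), $g(p)\to\infty$ as $p\to\infty$, so the set $\{p\ge0: g(p)\ge 0\}$ is nonempty. By continuity it is closed, and by (H1) it is an interval $[h,\infty)$ with $h=h(x,r)$. Moreover, $g(h)\ge 0$, and if $h>0$ then $g(h)=0$ by the intermediate value theorem, since $g<0$ on $[0,h)$.

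For (i): if $H(x,r,p)\ge0$, then $p$ lies in $\{g\ge0\}$, hence $p\ge h$. Conversely, if $p\ge h$, then $g(p)\ge g(h)\ge 0$ by monotonicity. So the supersolution properties coincide, and no positivity of $h$ is needed. For (ii): if $g(p)\le 0$, then either $g(p)<0$, which forces $p<h$, or $g(p)=0$, which forces $p\in\{g\ge0\}$ and so $p\ge h$. In the second case, since $g$ is strictly increasing and $g(h)\ge 0=g(p)$, we get $h\ge p$, hence $p=h$. In either case $p\le h$.

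For (iii), assume $h(x,r)>0$ and $p\le h$. Then $g(h)=0$ by the observation above, and monotonicity gives $g(p)\le g(h)=0$. The final assertion then follows by combining (i), (ii) and (iii).

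The main obstacle is (iii): it is the one place where the hypothesis $h>0$ matters. When $h=0$ one may have $g(0)>0$, and then $p=0\le h$ does not give $g(p)\le0$. So the proof must explicitly use the identity $g(h)=0$, which holds only when $h>0$. I would also state that continuity of $H$ in $p$ is what makes $\{g\ge0\}$ closed, so that the infimum defining $h$ is attained.
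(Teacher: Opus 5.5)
Your proposal is correct and follows the paper's proof essentially step by step. Both arguments reduce to the pointwise fact that $H(x,r,p)\ge 0$ if and only if $p\ge h(x,r)$, which comes from (H1) and the continuity of $H$. From this, (i) and (ii) follow directly, and (iii) follows from $H(x,r,h(x,r))=0$ when $h(x,r)>0$ together with monotonicity in $p$. You spell out more explicitly why $\{p\ge 0: H(x,r,p)\ge 0\}=[h(x,r),\infty)$ and why $h>0$ forces $H(x,r,h)=0$, steps the paper treats as routine.
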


\begin{proof}
(i) By the continuity of $H$ as well as (H1), we see from the definition of $h$ in \eqref{def-h} that for $x\in\Omega$, $r\in \R$ and $p\geq 0$, $H(x, r, p)\geq 0$ if and only if $p\geq h(x, r)$. Letting $r=u(x), p=|\nabla^- u|(x)$ with $u\in \lipl(\Omega)$ and $x\in \Omega$ arbitrary yields the equivalence between the Monge supersolution properties of \eqref{general eq} and \eqref{general loc_u}.

(ii) By (H1) and \eqref{def-h}, it is easily seen that for any $x\in \Omega$, $u\in \lipl(\Omega)$ satisfies $|\nabla^- u|(x)\leq h(x, u(x))$ provided that $H(x, u(x), |\nabla^- u|(x))\leq 0$. The conclusion (ii) then follows immediately. 

(iii) Under the assumption that $h(x, u(x))>0$ for any $x\in \Omega$, we can use (H1), (H2) and the continuity of $H$ to show that $H(x, u(x), h(x,u(x)))=0$. By (H1) again, we see that $|\nabla^- u|(x)\leq h(x, u(x))$ implies 
\[
H(x, u(x), |\nabla^- u|(x))\leq H(x, u(x), h(x,u(x)))=0.
\]
The proof of (iii) is thus complete. 

Clearly, the final assertion is a consequence of (i), (ii), and (iii). 
\end{proof}

Based on the form \eqref{general loc_u}, we shall show that the Monge subsolution property for \eqref{general eq} is preserved under a pointwise supremum. Let us first consider the case of two functions.

\begin{prop}\label{max_min_2_subs}
    Suppose that $(\X, d)$ is a complete length space and $\Omega\subsetneq\X$ is a domain. Assume that $H\in C(\X\times \R\times [0, \infty))$ satisfies (H1). Let $u_1$ and $u_2$ be Monge subsolutions of \eqref{general eq}. Then, $u,v:\Omega\to\R$ defined respectively by
    \[ 
    u(x):=\max\{u_1(x),u_2(x)\}, \quad v(x):=\min\{u_1(x),u_2(x)\}\quad \text{for $x\in\Omega$}, 
    \]
    are also Monge subsolutions of \eqref{general eq}.
\end{prop}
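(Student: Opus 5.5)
The plan is to argue pointwise and directly from the definition of the local subslope, without using the sphere characterization; only the monotonicity (H1) is needed. Fix $x\in\Omega$. First, $u$ and $v$ belong to $\lipl(\Omega)$, since maxima and minima of two locally Lipschitz functions are locally Lipschitz. For the maximum $u$ we split into two cases.

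\emph{Case $u_1(x)>u_2(x)$.} By continuity, $u=u_1$ in a neighborhood of $x$. The subslope is local, so $|\nabla^-u|(x)=|\nabla^-u_1|(x)$ and $u(x)=u_1(x)$. Hence $H(x,u(x),|\nabla^-u|(x))\le 0$ follows from the subsolution property of $u_1$. The case $u_2(x)>u_1(x)$ is symmetric.

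\emph{Case $u_1(x)=u_2(x)=u(x)$.} For every $y$ we have $u(y)\ge u_i(y)$ for both $i$. Therefore
\[
\bigl(u(x)-u(y)\bigr)_+\le \min_i \bigl(u_i(x)-u_i(y)\bigr)_+ ,
\]
which gives $|\nabla^-u|(x)\le \min\{|\nabla^-u_1|(x),|\nabla^-u_2|(x)\}$. Now apply (H1) with the common value $r=u(x)=u_1(x)$:
\[
H(x,u(x),|\nabla^-u|(x))\le H(x,u_1(x),|\nabla^-u_1|(x))\le 0 .
\]
So in both cases $u$ is a Monge subsolution.

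For the minimum $v$ we again split into cases. If $u_1(x)\ne u_2(x)$, then $v$ locally coincides with one of the $u_i$, and the conclusion is immediate as before. If $u_1(x)=u_2(x)=v(x)$, then for each $y$ we have $v(y)=u_i(y)$ for some index $i=i(y)$. Hence
\[
\bigl(v(x)-v(y)\bigr)_+\le \max_i \bigl(u_i(x)-u_i(y)\bigr)_+ .
\]
Taking the $\limsup$ of the quotient over $y\to x$, and using that the $\limsup$ of a maximum of two quantities equals the maximum of their $\limsup$s, we get
\[
|\nabla^-v|(x)\le \max\{|\nabla^-u_1|(x),|\nabla^-u_2|(x)\}=|\nabla^-u_j|(x)
\]
for some $j$. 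Since $u_j(x)=v(x)$, (H1) gives $H(x,v(x),|\nabla^-v|(x))\le H(x,u_j(x),|\nabla^-u_j|(x))\le 0$.

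There is no real obstacle here. The only delicate points are checking that the subslope is genuinely local, so that the strict-inequality cases reduce to a single function, and making sure the $\limsup$ of a maximum splits as stated in the equality case. Continuity of $H$ and (H2), (H3) are not needed: (H1) is applied only with the same second argument $u(x)=u_i(x)$, so no comparison in the $r$-variable arises.
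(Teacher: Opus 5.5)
Your proof is correct and follows essentially the same route as the paper: a pointwise comparison of subslopes, combined with (H1) at the common value $u(x)=u_i(x)$, and the same case split for the minimum (in the equality case the paper asserts $|\nabla^-v|(x)=\max\{|\nabla^-u_1|(x),|\nabla^-u_2|(x)\}$, whereas you only need $\le$). The one difference is that the paper does not split cases for the maximum: $|\nabla^-u|(x)\le|\nabla^-u_1|(x)$ holds whenever $u(x)=u_1(x)$, so your equality-case argument already covers the strict-inequality case.
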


\begin{proof}
It is straightforward to verify that $u,v\in\lipl(\Omega)$. To show that $u,v$ are Monge subsolutions of \eqref{general eq}, we fix $x\in\Omega$ arbitrarily. Without loss of generality, suppose that $u(x)=u_1(x)$.
Since $u_1$ is a Monge subsolution of \eqref{general eq} and 
\[
|\nabla^-u|(x)=\limsup_{y\to x}
\frac{(u(x)-u(y))_+}{d(x,y)}\leq
\limsup_{y\to x}
\frac{(u_1(x)-u_1(y))_+}{d(x,y)}
=|\nabla^-u_1|(x),
\]
by (H1), we have
\[
H\bigl(x,u(x),|\nabla^-u|(x)\bigr)\leq
H\bigl(x,u_1(x),|\nabla^-u_1|(x)\bigr)
\leq 0.
\]
Hence, $u$ is a Monge subsolution of \eqref{general eq}, thanks to the arbitrariness of $x\in \Omega$.

The proof for $v$ is slightly more involved. Suppose that $u_1(x)<u_2(x)$. Then by continuity $u_1<u_2$ in a neighborhood of $x$, and thus $v=u_1$ near $x$. It follows that $|\nabla^-v|(x)=|\nabla^-u_1|(x)$ and
\[
H(x,v(x),|\nabla^-v|(x))
=H(x,u_1(x),|\nabla^-u_1|(x))\leq0.
\]
The case $u_2(x)<u_1(x)$ can be handled in a symmetric manner. If $u_1(x)=u_2(x)$, then
\[
|\nabla^-v|(x)=\max\{|\nabla^-u_1|(x),|\nabla^-u_2|(x)\}.
\]
Without loss of generality, we may assume that $|\nabla^-v|(x)=|\nabla^-u_1|(x)$. Since $v(x)=u_1(x)=u_2(x)$, we obtain
\[
H(x,v(x),|\nabla^-v|(x))
=H(x,u_1(x),|\nabla^-u_1|(x))\leq 0,
\]
which, by the arbitrariness of $x\in \Omega$, implies that $v$ is a Monge subsolution of \eqref{general eq}. 
\end{proof}

Using Lemma~\ref{lem implicit} and Proposition~\ref{max_min_2_subs}, we can now prove the stability of the subsolution property for \eqref{general eq} under a pointwise supremum and infimum.

\begin{prop}[Pointwise extrema of subsolutions to HJ equations]\label{stab_gen_subs}
    Suppose that $(\X, d)$ is a complete length space and $\Omega\subsetneq\X$ is a domain. Assume that $H\in C(\X\times\R\times[0,\infty))$ satisfies (H1)--(H3). Let $\mathcal{U}$ be a nonempty set of Monge subsolutions to \eqref{general eq}. 
    Suppose that $w_1$ and $w_2$ satisfy \eqref{point-sup} and \eqref{point-inf} respectively. Then, $w_1$ and $w_2$ are Monge subsolutions of \eqref{general eq}. 
\end{prop}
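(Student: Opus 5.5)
Fix $x_0\in\Omega$; we show $H(x_0,w_i(x_0),|\nabla^-w_i|(x_0))\le 0$ for $i=1,2$. The idea is to freeze the $u$-dependence of $H$ on a small ball and reduce to the eikonal case handled in Proposition~\ref{prop:gen-eikonal1}.

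\emph{Step 1: local Lipschitz bounds.} We first need a uniform Lipschitz bound on the family $\U$ near $x_0$. Take $R>0$ with $|w_1|,|w_2|\le R$ on a closed ball $\ol{B_{r_0}(x_0)}\subset\Omega$. At this stage we only know pointwise finiteness, so this bound is to be justified along the way. Every $u\in\U$ then takes values in $[-R,R]$ on this ball. By (H2) there is $P$ such that $H(x,r,p)>0$ for all $p>P$ and $|r|\le R$. Hence $|\nabla^-u|\le P$ on $B_{r_0}(x_0)$ for every $u\in\U$. Theorem~\ref{thm:char}(i), rescaled as in the remark on \eqref{eikonal_c}, then gives $|u(x)-u(y)|\le P\,d(x,y)$ on a smaller ball. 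Consequently $w_1,w_2$ are $P$-Lipschitz there, and the local boundedness follows from pointwise finiteness at $x_0$.

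\emph{Step 2: freezing the zeroth-order term.} By Lemma~\ref{lem implicit}(ii), each $u\in\U$ satisfies $|\nabla^-u|(x)\le h(x,u(x))$. For $w_1$: since $u\le w_1$, the monotonicity \eqref{h_mono} gives
\[
|\nabla^-u|(x)\le h(x,u(x))\le \dots
\]
but this goes the wrong way, because $h(x,u(x))\ge h(x,w_1(x))$. So we use continuity instead. Fix $\delta>0$ and set $g(x):=\sup_{|r-w_1(x_0)|\le\delta}h(x,r)$. Only those $u$ with $u(x_0)>w_1(x_0)-\delta/2$ matter for $w_1$ near $x_0$; define $\U'$ as this subfamily. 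Their common Lipschitz bound gives $|u-w_1(x_0)|\le\delta$ on a small ball $B_\rho(x_0)$. Moreover, $\sup_{\U'}u=w_1$ near $x_0$, because the other members of $\U$ are at most $w_1(x_0)-\delta/2+P\rho$, which is below $w_1$ there if $\rho$ is small.

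Each $u\in\U'$ is then a Monge subsolution of $|\nabla u|=g+\delta$ on $B_\rho(x_0)$, where $g+\delta$ is continuous by the continuity of $h$. Proposition~\ref{prop:gen-eikonal1}, applied on the domain $B_\rho(x_0)$, yields
\[
|\nabla^-w_1|(x_0)\le g(x_0)+\delta .
\]
Letting $\delta\to0$ and using continuity of $h$, we get $|\nabla^-w_1|(x_0)\le h(x_0,w_1(x_0))$.

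The same argument applies to $w_2$, using the subfamily with $u(x_0)<w_2(x_0)+\delta/2$.

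\emph{Step 3: back to $H$.} It remains to pass from the eikonal inequality to $H(x_0,w_i(x_0),|\nabla^-w_i|(x_0))\le0$; this is the delicate point, since Lemma~\ref{lem implicit}(iii) requires $h>0$.
\begin{itemize}
\item If $h(x_0,w_i(x_0))>0$, then $H(x_0,w_i(x_0),h)=0$, and (H1) gives the claim.
\item If $h(x_0,w_i(x_0))=0$, then $|\nabla^-w_i|(x_0)=0$, and we must show $H(x_0,w_i(x_0),0)\le0$. Here we argue directly. Pick $u_n\in\U$ with $u_n(x_0)\to w_i(x_0)$. Then $H(x_0,u_n(x_0),|\nabla^-u_n|(x_0))\le0$, and by (H1) this gives $H(x_0,u_n(x_0),0)\le 0$. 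Passing to the limit by continuity of $H$ yields $H(x_0,w_i(x_0),0)\le0$.
\end{itemize}

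I expect Step 2 to be the main obstacle: restricting to a near-optimal subfamily while keeping the supremum/infimum unchanged locally, which relies on the uniform Lipschitz bound from Step 1.
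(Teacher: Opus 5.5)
Your Step~1 is circular, and Step~2 rests on it. You fix $R$ with $|w_1|,|w_2|\le R$ on $\ol{B_{r_0}(x_0)}$, get $P=P(R)$ from (H2), and deduce $|u(x)-u(y)|\le P\,d(x,y)$ for all $u\in\U$ on a smaller ball. You then say the local bound on $w_1,w_2$ ``follows from pointwise finiteness at $x_0$''. But the slope bound $|\nabla^-u|(x)\le P$ is only available at points where you already know $|u(x)|\le R$, so you are deriving the hypothesis you started from.

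Pointwise finiteness of $w_1,w_2$ gives no control of $\inf_{\U}u$ on any ball without an equicontinuity estimate, and that estimate is exactly what is at stake. The only a priori information is $|\nabla^-u|(x)\le h(x,u(x))$, which deteriorates where $u$ is very negative. Step~2 uses the same uniform bound twice: to get $|u-w_1(x_0)|\le\delta$ on $B_\rho(x_0)$ for $u\in\U'$, and to show that members outside $\U'$ stay below $w_1$ near $x_0$.

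The missing ingredient is a one-sided propagation argument based on (H3). Suppose $b<u(x_0)$ and $h(\cdot,b)\le M$ on a ball around $x_0$. By \eqref{h_mono}, the truncation $u_b=\max\{u,b\}$ satisfies $|\nabla^-u_b|\le h(\cdot,b)\le M$ there, however negative $u$ is elsewhere. Theorem~\ref{thm:char}(i) applied to $u_b/M$ gives $u_b(y)\ge u(x_0)-M\,d(x_0,y)$. Hence $u>b$ on a ball of radius comparable to $(u(x_0)-b)/M$, uniformly over all $u$ with $u(x_0)$ above a fixed level. This is how the paper proves $w_2\in\lipl$; there every $u\in\U$ qualifies, since $u(x_0)\ge w_2(x_0)>b$. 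With it your Steps~1--2 can be repaired: for $w_1$, apply it to the members with $u(x_0)>w_1(x_0)-\delta/2$, and at points near $x_0$ to control the remaining members.

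For $w_1$ the paper avoids equicontinuity entirely. It replaces $\U$ by $\{\max\{u_0,u\}:u\in\U\}$, which consists of subsolutions by Proposition~\ref{max_min_2_subs} and has the same supremum. Each member satisfies $|\nabla^-u|\le h(x,u(x))\le h(x,u_0(x))$, with a continuous right-hand side, so Proposition~\ref{prop:gen-eikonal1} applies directly; the paper then lets $u_0(x)\to w_1(x)$.

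Once the uniform estimate is in place, your freezing of the level at $w_1(x_0)-\delta$ is a valid alternative to that device. Your Step~3 matches the paper's case analysis.
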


\begin{proof}
We first show the subsolution property for $w_1$. For any fixed $u_0\in \mathcal{U}$, let 
\[
\mathcal{U}_0:=\{\max\{u_0,u\}:\; u\in \mathcal{U}\}.
\]
In view of Proposition \ref{max_min_2_subs}, $\mathcal{U}_0$ is a set of Monge subsolutions to \eqref{general eq} satisfying
\[
\sup\{u(x):u\in\mathcal{U}_0\}=w_1(x) \quad\text{for all}\  x\in\Omega. 
\] 
By Lemma~\ref{lem implicit}(ii), each $u\in\mathcal{U}_0$ satisfies  $|\nabla^-u|(x)\leq h(x,u(x))$ for any $x\in\Omega$. Moreover, noticing that $r\mapsto h(x, r)$ is nonincreasing as stated in \eqref{h_mono}, we have 
\[ 
|\nabla^-u|(x)\leq h(x,u(x))\leq h(x,u_0(x)). 
\]
Therefore, $u$ is a Monge subsolution of \eqref{eikonal} with $f\in C(\Omega)$ given by $f(x)=h(x, u_0(x))$ for $x\in \Omega$. Note that $f$ is continuous and locally bounded in $\Omega$. Applying Proposition \ref{prop:gen-eikonal1} with $\Omega'=B_\rho(x)\subset\Omega$ for each $x\in \Omega$ and $\rho>0$ small, we prove that $w_1$ is a Monge subsolution of \eqref{eikonal} with the same $f$, that is, $w_1\in \lipl(\Omega')$ and         
\[
|\nabla^-w_1|(y)\leq h(y,u_0(y))\quad\text{for all $y\in \Omega'$. }
\]
Since $u_0\in \U$ and $x\in\Omega$ were arbitrary, by \eqref{point-sup} and the continuity of $h$, we deduce that $w_1\in \lipl(\Omega)$ and
\[ 
|\nabla^-w_1|(x)\leq h(x,w_1(x))\quad \text{for all}\ x\in\Omega. 
\]        
If $|\nabla^-w_1|(x)>0$, by Lemma~\ref{lem implicit}(iii) we have
\[ 
H(x,w_1(x),|\nabla^-w_1|(x))\leq 0.
\]
Now suppose that $|\nabla^-w_1|(x)=0$. In this case, we take $\{u_j\}\subset\mathcal{U}$ such that $u_j(x)\to w_1(x)$ as $j\to\infty$.  By (H1), we have 
\[ 
H(x,u_j(x),0)\leq H(x,u_j(x), |\nabla^- u_j|(x))\leq 0. 
\]
Passing to the limit as $j\to\infty$, by the continuity of $H$, we thus deduce that
\[ 
H(x,w_1(x),|\nabla^-w_1|(x))=H(x,w_1(x),0)\leq 0. 
\]
Combining both cases, we show that $w_1$ is a Monge subsolution of \eqref{general eq}.
    
Now we turn to the proof for $w_2$. Again, by the monotonicity of $r\mapsto h(x,r)$, we have
\[ 
|\nabla^-u|(x)\leq h(x,u(x))\leq h(x,w_2(x))\quad \text{for all $x\in\Omega$.}
\]
Fix $x_0\in\Omega$ arbitrarily. Since $w_2(x_0)>-\infty$, let $b<w_2(x_0)$. By the continuity of $h(\cdot, b)$, there exist $M, R>0$ such that $\ol{B_R(x_0)}\subset\Omega$ and $h(x, b)\leq M$ in $B_R(x_0)$. We claim that, 
\begin{equation}\label{eq:subs_pt1}
    u> b\quad\text{in $B_r(x_0)$ for every $u\in\mathcal{U}$}
\end{equation}
when $r>0$ is sufficiently small, satisfying 
\begin{equation}\label{small radius}
    r< \frac{w_2(x_0)-b}{M}. 
\end{equation}
Indeed, for each $u\in\mathcal{U}$, let $u_b:=\max\{u, b\}$. Then, for all $x\in B_R(x_0)$,
\[ 
|\nabla^- u_b|(x)=
\begin{cases}
    \ |\nabla^-u|(x)\leq h(x, b)\leq M,& \text{if $u(x)>b$,} \\
    \ 0,& \text{if $u(x)\leq b$,} 
\end{cases} 
\]
By Theorem~\ref{thm:char}(i), applied to $u_b/M$, we obtain the $M$-Lipschitz continuity of $u_b$ in $B_r(x_0)$ for $r>0$ small that satisfies \eqref{small radius}. Since $u(x_0)\geq w_2(x_0)$, this yields, for any $y\in B_r(x_0)$,  
\[
u_b(y)\geq u_b(x_0)-Mr> u_b(x_0)+b-w_2(x_0)\geq  b.
\]
The claim \eqref{eq:subs_pt1} immediately follows from the definition of $u_b$. Consequently, 
\[
|\nabla^-u|(x)\leq h(x, u(x))\leq h(x, b) \quad \text{for all $x\in B_r(x_0)$.} 
\]
By Proposition~\ref{prop:gen-eikonal1} with $f(x)=h(x, b)$ on $B_r(x)$, we obtain $w_2\in\lipl(B_r(x_0))$. Applying again Proposition~\ref{prop:gen-eikonal1} with $f(x)=h(x, w_2(x))$ yields  
\[ 
|\nabla^-w_2|(x)\leq h(x,w_2(x))\quad \text{for all $x\in B_r(x_0)$.} 
\]
To show $H(x, w_2(x), |\nabla^-w_2|(x))\leq 0$ for any $x\in B_r(x_0)$, one can follow the same argument for $w_1$, discussing the case $|\nabla^- w_2|(x)>0$ and the case $|\nabla^- w_2|(x)=0$ separately. Our proof is now complete, since $x_0\in \Omega$ is taken arbitrarily.  
\end{proof}

\begin{prop}[Pointwise minimum between supersolutions to HJ equations]\label{min_2_supers}
    Suppose that $(\X, d)$ is a complete length space and $\Omega\subsetneq\X$ is a domain. Assume that $H\in C(\X\times \R\times [0, \infty))$ satisfies (H1). Let $v_1$ and $v_2$ be Monge supersolutions of \eqref{general eq}. Then, $v:\Omega\to\R$ given by
    \[ 
    v(x):=\min\{v_1(x),v_2(x)\}\quad \text{for each}\ x\in\Omega, 
    \]
    is a Monge supersolution of \eqref{general eq}.
\end{prop}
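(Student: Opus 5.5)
The plan is a pointwise argument at each $x\in\Omega$, modeled on the proof of Proposition~\ref{max_min_2_subs}. The key monotonicity is reversed: for supersolutions we want $|\nabla^- v|(x)$ to be large, and taking a minimum can only increase the subslope at points where the minimum is attained. First, $v\in\lipl(\Omega)$ holds because the minimum of two locally Lipschitz functions is locally Lipschitz. Now fix $x\in\Omega$ and, without loss of generality, suppose $v(x)=v_1(x)\le v_2(x)$.

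The key inequality is that, for every $y\in\Omega$,
\[
v(x)-v(y)=v_1(x)-\min\{v_1(y),v_2(y)\}\ \ge\ v_1(x)-v_1(y),
\]
since $\min\{v_1(y),v_2(y)\}\le v_1(y)$. The map $a\mapsto a_+$ is nondecreasing, so $(v(x)-v(y))_+\ge (v_1(x)-v_1(y))_+$. Dividing by $d(x,y)$ and taking $\limsup_{y\to x}$ gives
\[
|\nabla^- v|(x)\ \ge\ |\nabla^- v_1|(x).
\]

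Next, (H1) says $p\mapsto H(x,r,p)$ is increasing, and $v(x)=v_1(x)$. Together with the supersolution property of $v_1$ this yields
\[
H\bigl(x,v(x),|\nabla^- v|(x)\bigr)\ \ge\ H\bigl(x,v_1(x),|\nabla^- v_1|(x)\bigr)\ \ge\ 0 .
\]
The case $v(x)=v_2(x)$ is symmetric, and it covers the tie $v_1(x)=v_2(x)$ as well, since either index may be chosen. Since $x$ is arbitrary, $v$ is a Monge supersolution of \eqref{general eq}.

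I do not expect a real obstacle. Unlike the minimum of subsolutions in Proposition~\ref{max_min_2_subs}, no case split on strict versus equal values is needed, and continuity is not used beyond local Lipschitz regularity. The only point to watch is that the inequality $(v(x)-v(y))_+\ge(v_1(x)-v_1(y))_+$ relies on $v(x)=v_1(x)$, so the index must be chosen as one attaining the minimum at $x$. Assumptions (H2) and (H3) are not needed here.
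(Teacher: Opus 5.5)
Your proposal is correct and follows essentially the same argument as the paper. The paper also picks the index attaining the minimum at $x$, observes $|\nabla^-v|(x)\ge|\nabla^-v_1|(x)$, and concludes using the monotonicity (H1) together with $v(x)=v_1(x)$.
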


\begin{proof}
It is clear that $v\in\lipl(\Omega)$. It remains to show that $v$ is a Monge supersolution of \eqref{general eq}. Fix $x\in\Omega$. Without loss of generality, suppose that $v(x)=v_1(x)$.
Since $v_1$ is a Monge supersolution of \eqref{general eq}, we have
\begin{equation}\label{min_super_eq_1}
    H\bigl(x,v_1(x),|\nabla^-v_1|(x)\bigr)\geq 0.
\end{equation}
Noticing that
\[
|\nabla^-v|(x)=\limsup_{y\to x}
\frac{(v(x)-v(y))_+}{d(x,y)}\geq
\limsup_{y\to x}
\frac{(v_1(x)-v_1(y))_+}{d(x,y)}
=|\nabla^-v_1|(x), 
\]
by \eqref{min_super_eq_1} and (H1), we obtain
\[
H\bigl(x,v(x),|\nabla^-v|(x)\bigr)
\geq H\bigl(x,v_1(x),|\nabla^-v_1|(x)\bigr)\geq 0.
\]
Hence, $v$ is a Monge supersolution of \eqref{general eq}, due to the arbitrariness of $x\in \Omega$.
\end{proof}

\begin{prop}[Pointwise infimum of supersolutions to HJ equations]\label{stab:super_gen_HJ}
    Suppose that $(\X, d)$ is a complete length space and $\Omega\subsetneq\X$ is a domain. Assume that $H\in C(\X\times\R\times[0,\infty))$ satisfies (H1)--(H3). Let $\mathcal{U}$ be a nonempty set of Monge supersolutions to \eqref{general eq}. Suppose that  \begin{equation}\label{inf_supers_gen}
        w_3(x):=\inf\{u(x): u\in\mathcal{U}\}>-\infty \quad \text{for each $x\in \Omega$.}
    \end{equation}
    If $w_3\in\lipl(\Omega)$, then $w_3$ is a Monge supersolution of \eqref{general eq}.
\end{prop}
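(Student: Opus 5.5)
We reduce to the eikonal case through Lemma~\ref{lem implicit}(i) and then run the argument of Proposition~\ref{prop:gen-eikonal2} locally.

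By Lemma~\ref{lem implicit}(i), $w_3$ is a Monge supersolution of \eqref{general eq} if and only if $|\nabla^- w_3|(x)\geq h(x,w_3(x))$ for every $x\in\Omega$. We argue by contradiction. Suppose there is $x_0\in\Omega$ with
\[
|\nabla^-w_3|(x_0)<c<h(x_0,w_3(x_0))
\]
for some $c>0$. The goal is to find a neighborhood of $x_0$ on which every $u\in\U$ is a Monge supersolution of the constant-speed equation \eqref{eikonal_c}, after which the sphere argument applies.

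\textbf{Localization.} By continuity of $h$, choose $\delta>0$ and $R>0$ with $\ol{B_R(x_0)}\subset\Omega$ and $h(x,s)>c$ for all $x\in B_R(x_0)$ and $s\le w_3(x_0)+\delta$. Since $w_3\in\lipl(\Omega)$, shrinking $R$ gives $w_3< w_3(x_0)+\delta/2$ on $B_R(x_0)$.

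The monotonicity \eqref{h_mono} gives $h(x,u(x))\geq h(x,w_3(x))$ pointwise. This lower bound, however, depends on $x$ through $w_3(x)$, not through a uniform constant. For any $u\in\U$, Lemma~\ref{lem implicit}(i) and \eqref{h_mono} give
\[
|\nabla^-u|(x)\ge h(x,u(x))\ge h(x,w_3(x))>c \quad\text{for } x\in B_R(x_0),
\]
using $u\ge w_3$ and $w_3(x)<w_3(x_0)+\delta$. So every $u\in\U$ is a Monge supersolution of \eqref{eikonal_c} in $B_R(x_0)$, with no need to control $u$ itself near $x_0$ beyond $u\ge w_3$. This is the convenient direction of the monotonicity of $h$.

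\textbf{Sphere argument.} Apply Theorem~\ref{thm:char}(ii), rescaled by $c$, in the domain $B_R(x_0)$. The lower-boundedness hypothesis holds for each $u$ on closed balls because $u\ge w_3$ and $w_3$ is continuous, hence locally bounded there; this is the one place the assumption $w_3\in\lipl(\Omega)$ is used. Theorem~\ref{thm:char}(ii) then gives $S_\rho^-u(x_0)\ge c$ for every $\rho\in(0,R)$ and every $u\in\U$.

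Since $w_3\le u$ on $\partial B_\rho(x_0)$, we get
\[
\sup_{y\in\partial B_\rho(x_0)}\frac{u(x_0)-w_3(y)}{\rho}\ge c.
\]
Taking the infimum over $u\in\U$ in $u(x_0)$ yields $S_\rho^-w_3(x_0)\ge c$. Here one must check that the infimum passes inside the supremum, which holds because $u(x_0)$ enters only as an additive constant.

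Finally, since $w_3$ is locally Lipschitz, Theorem~\ref{thm:char}(ii), applied in $B_R(x_0)$ in the reverse direction (the implication that needs no boundedness), gives $|\nabla^-w_3|(x_0)\ge c$. This contradicts the choice of $c$.

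The expected main obstacle is the localization step: securing a uniform constant lower bound for $h(x,u(x))$ over all $u\in\U$ near $x_0$. It is resolved above by combining \eqref{h_mono} with $u\ge w_3$ and the continuity of $w_3$.
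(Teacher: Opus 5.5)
\textbf{There is a genuine gap: your localization step uses the monotonicity of $h$ in the wrong direction.} By (H3), $H$ is nondecreasing in $r$, so $r\mapsto h(x,r)$ is \emph{nonincreasing}; this is \eqref{h_mono}. Hence $u\ge w_3$ gives $h(x,u(x))\le h(x,w_3(x))$, not $\ge$. A lower bound on $u$ therefore gives no lower bound on $h(x,u(x))$. A member of $\mathcal{U}$ may be large near $x_0$, so that $h(x,u(x))$ is small or even zero, and then it need not satisfy $|\nabla^-u|\ge c$ anywhere near $x_0$.

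Concretely, take $\X=\R$, $\Omega=(-1,\tfrac12)$, $H(x,r,p)=p+r$ (so $h(x,r)=(-r)_+$), and $\mathcal{U}=\{-\tfrac12-x,\ 1\}$. Both functions are Monge supersolutions, $w_3(x)=-\tfrac12-x$ and $h(0,w_3(0))=\tfrac12$. Yet the constant $u\equiv 1$ has $|\nabla^-u|\equiv 0$ and $S^-_\rho u(0)=0$. So the assertion that every $u\in\mathcal{U}$ is a Monge supersolution of \eqref{eikonal_c} in $B_R(x_0)$ is unfounded. With it falls the inequality $S^-_\rho u(x_0)\ge c$ for all $u\in\mathcal{U}$, on which your sphere step rests. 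The direction you call convenient is exactly the inconvenient one.

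\textbf{The missing idea is an upper truncation by a fixed member of the family.} Fix $u_0\in\mathcal{U}$ and pass to $\mathcal{U}_0=\{\min\{u_0,u\}: u\in\mathcal{U}\}$. By Proposition~\ref{min_2_supers} these are still Monge supersolutions, and their infimum is still $w_3$. Now every member satisfies $u\le u_0$, so \eqref{h_mono} gives the $u$-independent bound $|\nabla^-u|(x)\ge h(x,u(x))\ge h(x,u_0(x))$.

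The paper then proceeds as follows:
\begin{itemize}
\item it applies Proposition~\ref{prop:gen-eikonal2} with $f=h(\cdot,u_0(\cdot))$ to get $|\nabla^-w_3|\ge h(\cdot,u_0(\cdot))$ in $\Omega$;
\item it takes the supremum over $u_0\in\mathcal{U}$, using $\inf_{u_0}u_0(x)=w_3(x)$ and the continuity of $h$, to reach $|\nabla^-w_3|(x)\ge h(x,w_3(x))$;
\item it concludes by Lemma~\ref{lem implicit}(i).
\end{itemize}

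Within your own contradiction setup the repair is equally short. Choose $u_0\in\mathcal{U}$ with $u_0(x_0)<w_3(x_0)+\delta/2$. Shrink $R$, using the continuity of $u_0$, so that $u_0<w_3(x_0)+\delta$ on $B_R(x_0)$. Then every $u\in\mathcal{U}_0$ satisfies $u(x)\le w_3(x_0)+\delta$, hence $|\nabla^-u|(x)\ge h(x,u(x))>c$ on $B_R(x_0)$. Since also $u\ge w_3$, your sphere argument then goes through verbatim with $\mathcal{U}_0$ in place of $\mathcal{U}$.
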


\begin{proof}
For any fixed $u_0\in \mathcal{U}$, let $\mathcal{U}_0:=\{\min\{u_0,u\}:\; u\in \mathcal{U}\}$. In view of Proposition \ref{min_2_supers}, $\mathcal{U}_0$ is still a set of Monge supersolutions to \eqref{general eq}. It is also clear that 
\[
\inf\{u(x): u\in\mathcal{U}_0\}=w_3(x) \quad\text{for all}\  x\in\Omega. 
\]
By Lemma~\ref{lem implicit}(i), each $u\in\mathcal{U}_0$ satisfies 
$|\nabla^-u|(x)\geq h(x,u(x))$ for any $x\in\Omega$. Moreover, noticing that $r\mapsto h(x, r)$ is nonincreasing as stated in \eqref{h_mono}, we have 
\[ 
|\nabla^-u|(x)\geq h(x,u(x))\geq h(x,u_0(x)). 
\]
Therefore, $u$ is a Monge supersolution of \eqref{eikonal} with $f\in C(\Omega)$ given by $f(x)=h(x, u_0(x))$ for $x\in \Omega$. Note that $f$ is continuous and nonnegative in $\Omega$. 
Applying Proposition~\ref{prop:gen-eikonal2} and the assumption that $w_3\in\lipl(\Omega)$, we prove that $w_3$ is a Monge supersolution of \eqref{eikonal} with such $f$. Since $u_0\in \U$ is taken arbitrarily, by \eqref{inf_supers_gen} and the continuity of $h$, we deduce that $|\nabla^-w_3|(x)\geq h(x,w_3(x))$ for each $x\in\Omega$. It follows from Lemma~\ref{lem implicit}(i) again that $w_3$ is a Monge supersolution of \eqref{general eq}.
\end{proof}

\begin{thm}[Pointwise infimum of solutions to HJ equations]\label{cor:HJ_sol}
    Suppose that $(\X, d)$ is a complete length space and $\Omega\subsetneq\X$ is a domain. Assume that $H\in C(\X\times\R\times[0,\infty))$ satisfies (H1)--(H3). Let $\mathcal{U}$ be a nonempty collection of Monge solutions to \eqref{general eq}. Suppose that 
    \begin{equation*}
        w(x):=\inf\{u(x): u\in\mathcal{U}\}>-\infty \quad \text{for each $x\in \Omega$.}
    \end{equation*}
    Then, $w$ is a Monge solution of \eqref{general eq}.
\end{thm}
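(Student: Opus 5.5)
The plan mirrors the proof of Theorem~\ref{thm:eikonal_gen_sol}: combine Proposition~\ref{stab_gen_subs} for the subsolution part with Proposition~\ref{stab:super_gen_HJ} for the supersolution part. The only link between them is that the supersolution result requires $w\in\lipl(\Omega)$ as a hypothesis, and this regularity will come from the subsolution part.

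\textbf{Subsolution property and regularity.} Each $u\in\mathcal{U}$ is a Monge solution, hence in particular a Monge subsolution of \eqref{general eq}. The family is nonempty and the pointwise infimum $w=w_2$ satisfies \eqref{point-inf} by assumption. Proposition~\ref{stab_gen_subs}, applied to the $w_2$ case, then gives two conclusions: $w\in\lipl(\Omega)$, and $w$ is a Monge subsolution of \eqref{general eq}. Only \eqref{point-inf} is used here, not \eqref{point-sup}, because the proof for $w_2$ there relies only on $w_2(x_0)>-\infty$. Recall how the local Lipschitz regularity arises in that proof. One picks $b<w_2(x_0)$ and shows $u>b$ near $x_0$ uniformly in $u\in\mathcal{U}$. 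Then every $u$ is a subsolution of an eikonal equation with the locally bounded right-hand side $h(\cdot,b)$, and Proposition~\ref{prop:gen-eikonal1} gives a uniform local Lipschitz bound.

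\textbf{Supersolution property.} Each $u\in\mathcal{U}$ is also a Monge supersolution, so $\mathcal{U}$ is a nonempty family of supersolutions. Its infimum $w_3=w$ is finite everywhere and, by the previous step, lies in $\lipl(\Omega)$. Proposition~\ref{stab:super_gen_HJ} therefore applies and shows that $w$ is a Monge supersolution of \eqref{general eq}. Combining the two steps, $w$ is a Monge solution.

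The main point to check, though it is not a serious obstacle, is that the local Lipschitz bound really transfers to $w$. This is where the subsolution half of the solution property is essential: without it the infimum of supersolutions can fail to be continuous, as the example following Proposition~\ref{prop:gen-eikonal2} shows. Everything else is a direct invocation of the earlier results.
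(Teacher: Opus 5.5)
Your proposal is correct and is exactly the paper's argument: Proposition~\ref{stab_gen_subs} (the $w_2$ case) gives $w\in\lipl(\Omega)$ and the subsolution property, and this regularity supplies the hypothesis needed to invoke Proposition~\ref{stab:super_gen_HJ} for the supersolution property. Your remark that the $w_2$ half requires only \eqref{point-inf} is accurate, and the paper relies on this implicitly as well.
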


\begin{proof}
By Proposition~\ref{stab_gen_subs}, $w\in\lipl(\Omega)$ is a Monge subsolution of \eqref{general eq}. Since each $u\in\mathcal{U}$ is a Monge supersolution of \eqref{general eq}, it follows from Proposition~\ref{stab:super_gen_HJ} that $w$ is also a Monge supersolution of \eqref{general eq}. Hence, $w$ is a Monge solution of \eqref{general eq}. 
\end{proof}

We now proceed to establish a Perron-type existence result for Monge solutions of \eqref{general eq}. 

\begin{thm}[Perron's method for HJ equations]\label{gen_perron's method}
    Suppose that $(\X, d)$ is a complete length space and $\Omega\subsetneq\X$ is a domain. Assume that $H\in C(\X\times\R\times[0,\infty))$ and (H1)--(H3) hold. Assume that there exist a Monge subsolution $u_-$ and a Monge supersolution $u_+$ of \eqref{general eq} such that $u_-\leq u_+$ in $\Omega$. Define $\overline{w},\ul{w}:\Omega\to\R$ by  
    \begin{equation}\label{perron-def gen_max}
        \ol{w}(x):=\sup\{u(x): \text{$u$ is a Monge subsolution of \eqref{general eq} satisfying $u_-\leq u\leq u_+$ in $\Omega$}\}, 
    \end{equation}
    \begin{equation*}
        \ul{w}(x):=\inf\{v(x): \text{$v$ is a Monge supersolution of \eqref{general eq} satisfying $u_-\leq v\leq u_+$ in $\Omega$}\}. 
    \end{equation*}
    Then, $\ol{w}$ and $\ul{w}$ are Monge solutions of \eqref{general eq} with $\ul{w}\leq \ol{w}$.
\end{thm}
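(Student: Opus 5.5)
I would mirror the proof of Theorem~\ref{perron:gen_eikonal}, replacing the eikonal ingredients by their Hamilton--Jacobi counterparts: Proposition~\ref{stab_gen_subs} in place of Proposition~\ref{prop:gen-eikonal1}, Theorem~\ref{cor:HJ_sol} in place of Theorem~\ref{thm:eikonal_gen_sol}, and Lemma~\ref{lem implicit} to pass between \eqref{general eq} and \eqref{general loc_u}.

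\textbf{Step 1: $\ol{w}$ is a subsolution.} The admissible class contains $u_-$ and is bounded above by $u_+$. Hence \eqref{point-sup} holds, and Proposition~\ref{stab_gen_subs} gives $\ol{w}\in\lipl(\Omega)$, $u_-\le\ol{w}\le u_+$, and that $\ol{w}$ is a Monge subsolution of \eqref{general eq}.

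\textbf{Step 2: $\ol{w}$ is a supersolution.} Argue by contradiction. Suppose $H(x_0,\ol{w}(x_0),q)<0$ for some $x_0\in\Omega$, where $q=|\nabla^-\ol{w}|(x_0)$. By (H1) and the definition \eqref{def-h}, this means $q<h(x_0,\ol{w}(x_0))$.
\begin{itemize}
\item Choose $p_0\in(q,h(x_0,\ol{w}(x_0)))$, so that $H(x_0,\ol{w}(x_0),p_0)<0$.
\item By continuity of $H$ and of $\ol{w}$, there are $r_0>0$ and $\delta>0$ such that $H(x,s,p)<0$ whenever $x\in B_{r_0}(x_0)$, $|s-\ol{w}(x_0)|\le\delta$ and $p\le p_0$. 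For this, use that $H$ is increasing in $p$, so it suffices to check $p=p_0$, and shrink the neighborhood.
\item If $\ol{w}(x_0)=u_+(x_0)$, the same comparison as in the eikonal proof gives $|\nabla^-u_+|(x_0)\le p_0-\eta$. Then (H1) yields $H(x_0,u_+(x_0),|\nabla^-u_+|(x_0))<0$, contradicting that $u_+$ is a supersolution.
\item Otherwise $\ol{w}(x_0)<u_+(x_0)$. Build the bump $\psi$ exactly as in \eqref{perron eq4-1}, with $r$ small enough that $\eta r/2+p_0r<\delta$ and $\psi\le u_+$. Define $\tilde w$ as in \eqref{perron eq5}.
\end{itemize}
Where $\tilde w=\ol{w}$ near a point, or where the two coincide, the argument in Proposition~\ref{max_min_2_subs} keeps the subsolution inequality. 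Where $\tilde w=\psi>\ol{w}$, we have $|\nabla^-\tilde w|\le p_0$ and $|\tilde w(x)-\ol{w}(x_0)|\le\delta$, so $H<0$ there. Thus $\tilde w$ is an admissible subsolution with $\tilde w(x_0)>\ol{w}(x_0)$, contradicting maximality.

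The main obstacle is this last bookkeeping: the $u$-dependence of $H$ means the bump must be kept within $\delta$ of $\ol{w}(x_0)$. The point is that $\psi$ ranges only over $[\ol{w}(x_0)-p_0r,\ \ol{w}(x_0)+\eta r/2]$, and $\ol{w}\le\tilde w\le\max\{\ol{w},\psi\}$ on $B_r(x_0)$. Monotonicity (H3) is not even needed here.

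\textbf{Step 3: $\ul{w}$ is a solution.} Repeat the argument around \eqref{perron eq6} verbatim. For each supersolution $v_0$ with $u_-\le v_0\le u_+$, Steps 1--2 with $u_+$ replaced by $v_0$ produce a solution $w_0$ with $u_-\le w_0\le v_0$. Hence $\ul{w}$ equals the infimum over Monge solutions in the order interval, which is nonempty since $\ol{w}$ belongs to it. This infimum is $\ge u_->-\infty$, so Theorem~\ref{cor:HJ_sol} shows $\ul{w}$ is a Monge solution. Finally, $\ul{w}\le\ol{w}$ because $\ol{w}$ itself is a supersolution in the class defining $\ul{w}$.
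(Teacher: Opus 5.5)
Your proposal is correct and follows the paper's proof essentially step for step. You use Proposition~\ref{stab_gen_subs} for the subsolution property of $\ol{w}$, and the eikonal bump $\psi$ with $r$ chosen small, via continuity of $H$ at $(x_0,\ol{w}(x_0),p_0)$ and (H1), so that $H(x,\tilde w(x),|\nabla^-\tilde w|(x))<0$ wherever $\tilde w>\ol{w}$; you then reduce $\ul{w}$ to an infimum of solutions and apply Theorem~\ref{cor:HJ_sol}. The only difference is that you bound $\tilde w$ by the explicit range of $\psi$ rather than by the oscillation of $\ol{w}$ on $B_r(x_0)$, which is inessential.
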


\begin{proof}
Let us first verify the solution property for $\ol{w}$. It is clear that $\ol{w}\leq u_+$ on $\Omega$. In view of Proposition~\ref{stab_gen_subs}, $\ol{w}$ is a Monge subsolution of \eqref{general eq}, which by Lemma~\ref{lem implicit}(ii) yields $|\nabla^- \overline{w}|\leq f$ in $\Omega$ with $f\in C(\Omega)$ defined by  
\begin{equation}\label{h-to-f}
    f(x)=h(x, \ol{w}(x)), \quad x\in \Omega.
\end{equation}
It suffices to show that $\ol{w}$ is a Monge supersolution of \eqref{general eq}. Assume by contradiction that $\ol{w}$ is not a Monge supersolution, i.e., there exists $x_0\in\Omega$ such that 
\[ 
H(x_0,\overline{w}(x_0),|\nabla^-\overline{w}|(x_0))<0. 
\]
Then, we have $\ol{w}(x_0)<u_+(x_0)$, since $u_+$ is a Monge supersolution. Moreover, by the continuity of $H$ and the definition of $h$, we obtain $p_0>0$ such that 
\[
|\nabla^- \ol{w}|(x_0)<p_0< h(x_0, \ol{w}(x_0))=f(x_0).
\]
Following the proof of Theorem \ref{perron:gen_eikonal} for $f$ given by \eqref{h-to-f}, we can construct $\tilde{w}$ as in \eqref{perron eq5} with $\psi$ still defined by \eqref{perron eq4-1} and $\eta, r>0$ taken sufficiently small. Note that for $x\in \Omega$, 
\begin{equation}\label{perron-gen eq1}
    H(x, \tilde{w}(x), |\nabla^- \tilde{w}|(x))\leq 0
\end{equation}
holds if $\tilde{w}(x)=\ol{w}(x)$. If on the other hand $\tilde{w}(x)>\ol{w}(x)$, we have $x\in B_r(x_0)$ and $\tilde{w}=\psi$ in a neighborhood of $x$, so that $|\nabla^-\tilde{w}|(x)\leq p_0$. Moreover, on $B_r(x_0)$, 
\[
|\tilde{w}(x)-\ol{w}(x_0)|\leq \sup_{y\in B_r(x_0)}|\ol{w}(y)-\ol{w}(x_0)|+\dfrac{\eta r}{2}.
\]
The right-hand side tends to zero as $r\to 0$. Since $H(x_0,\ol{w}(x_0),p_0)<0$, the continuity of $H$ allows us to choose $r$ small that $H(x,\tilde{w}(x),p_0)<0$ throughout $B_r(x_0)$. By (H1), \eqref{perron-gen eq1} holds at such $x$ as well. We conclude that $\tilde{w}$ is a Monge subsolution of \eqref{general eq} satisfying $u_-\leq \tilde{w}\leq u_+$ in $\Omega$ and $\tilde{w}(x_0)>\ol{w}(x_0)$. This is a contradiction to \eqref{perron-def gen_max}, and therefore $\ol{w}$ is a Monge solution of \eqref{general eq}.

As for $\ul{w}$, the same argument as in Theorem~\ref{perron:gen_eikonal} applies. Introducing the supersolution class $\ol{\S}$ and the solution class $\S$ for \eqref{general eq} analogously to \eqref{sol-class}, we can obtain \eqref{perron eq6} again in this general setting. Applying Theorem~\ref{cor:HJ_sol}, we conclude that $\ul{w}$ is a Monge solution of \eqref{general eq}.
\end{proof}

\bibliographystyle{abbrv}

\begin{thebibliography}{10}

\bibitem{AF}
L.~Ambrosio and J.~Feng.
\newblock On a class of first order {H}amilton-{J}acobi equations in metric spaces.
\newblock {\em J. Diff. Equ.}, 256(7):2194--2245, 2014.

\bibitem{BCBook}
M.~Bardi and I.~Capuzzo-Dolcetta.
\newblock {\em Optimal control and viscosity solutions of
  {H}amilton-{J}acobi-{B}ellman equations}.
\newblock Systems \& Control: Foundations \& Applications. Birkh{\"a}user
  Boston Inc., Boston, MA, 1997.
\newblock With appendices by Maurizio Falcone and Pierpaolo Soravia.

\bibitem{BrDa}
A.~Briani and A.~Davini.
\newblock Monge solutions for discontinuous {H}amiltonians.
\newblock {\em ESAIM Control Optim. Calc. Var.}, 11(2):229--251, 2005.

\bibitem{CCM}
F.~Camilli, R.~Capitanelli, and C.~Marchi.
\newblock Eikonal equations on the {S}ierpinski gasket.
\newblock {\em Math. Ann.}, 364(3-4):1167--1188, 2016.


\bibitem{CaSi1}
F.~Camilli and A.~Siconolfi.
\newblock Hamilton-{J}acobi equations with measurable dependence on the state variable.
\newblock {\em Adv. Diff. Equ.}, 8(6):733--768, 2003.

\bibitem{CIL}
M.~G.~Crandall, H.~Ishii, and P.-L.~Lions.
\newblock User's guide to viscosity solutions of second order partial
  differential equations.
\newblock {\em Bull. Amer. Math. Soc. (N.S.)}, 27(1):1--67, 1992.

\bibitem{DaLeSa}
A.~Daniilidis, T.~M.~Le, and D.~Salas.
\newblock Metric compatibility and determination in complete metric spaces.
\newblock {\em Math. Z.}, 308(4):Paper No. 62, 31, 2024.

\bibitem{DaSa}
A.~Daniilidis and D.~Salas.
\newblock A determination theorem in terms of the metric slope.
\newblock {\em Proc. Amer. Math. Soc.}, 150(10):4325--4333, 2022.

\bibitem{E1}
I.~Ekeland. 
\newblock On the variational principle. 
\newblock {\em J. Math. Anal. Appl.}, 47:324–353, 1974.

\bibitem{E2}
I.~Ekeland. 
\newblock Nonconvex minimization problems. 
\newblock {\em Bull. Amer. Math. Soc. (N.S.)}, 1(3):443–474, 1979.

\bibitem{EGV}
F.~Essebei, G.~Giovannardi, and S.~Verzellesi.
\newblock Monge solutions for discontinuous Hamilton-Jacobi equations in Carnot groups.
\newblock {\em NoDEA Nonlinear Diff. Equ. Appl.},
31, Paper No. 95, 2024.

\bibitem{GaS2}
W.~Gangbo and A.~{\'S}wi{\polhk{e}}ch.
\newblock Optimal transport and large number of particles.
\newblock {\em Discrete Contin. Dyn. Syst.}, 34(4):1397--1441, 2014.

\bibitem{GaS}
W.~Gangbo and A.~{\'S}wi{\polhk{e}}ch.
\newblock Metric viscosity solutions of {H}amilton-{J}acobi equations depending
  on local slopes.
\newblock {\em Calc. Var. Part. Diff. Equ.}, 54(1):1183--1218, 2015.

\bibitem{GHN}
Y.~Giga, N.~Hamamuki, and A.~Nakayasu.
\newblock Eikonal equations in metric spaces.
\newblock {\em Trans. Amer. Math. Soc.}, 367(1):49--66, 2015.

\bibitem{JC}
H.~Jiang and X.~Cui.
\newblock Metric viscosity solutions and distance-like functions on the Wasserstein space.
\newblock {\em Diﬀer. Geom. Appl.}, 103, Paper No. 102387, 2026.

\bibitem{LiS}
P.-L.~Lions and P.~E.~Souganidis.
\newblock Differential games, optimal control and directional derivatives of viscosity solutions of Bellman's and Isaacs' equations.
\newblock {\em SIAM J. Control Optim.}, 23(4):566--583, 1985. 

\bibitem{LMit1}
Q.~Liu and A.~Mitsuishi.
\newblock Principal eigenvalue problem for infinity {L}aplacian in metric
  spaces.
\newblock {\em Adv. Nonlinear Stud.}, 22(1):548--573, 2022.

\bibitem{LN}
Q.~Liu and A.~Nakayasu. Convexity preserving properties for Hamilton-Jacobi equations in geodesic spaces.
\newblock {\em Discrete Contin. Dyn. Syst.}, 39(1):157--183, 2019.

\bibitem{LShZ}
Q.~Liu, N.~Shanmugalingam, and X.~Zhou.
\newblock Equivalence of solutions of eikonal equation in metric spaces.
\newblock {\em J. Diff. Equ.}, 272:979--1014, 2021.

\bibitem{LShZ2}
Q.~Liu, N.~Shanmugalingam, and X.~Zhou.
\newblock Discontinuous eikonal equations in metric measure spaces.
\newblock {\em Trans. Amer. Math. Soc.}, 378(1):695--729, 2025.

\bibitem{LW}
Q.~Liu and M.~B.~P. Wiranata.
\newblock Monge solutions of time-dependent Hamilton-Jacobi equations in metric spaces.
\newblock {\em ESAIM Control Optim. Calc. Var.}, 32, Paper No. 2, 39 pp., 2026.

\bibitem{LTG}
T.~M.~L\^e and S.~Tapia-Garc\'ia.
\newblock On (discounted) global eikonal equations in metric spaces,
\newblock to appear in {\em Ann. Sc. Norm. Super. Pisa Cl. Sci. (5).} 

\bibitem{M}
S.~Makida.
\newblock On the Gromov--Hausdorff stability of metric viscosity solutions.
\newblock {\em Proc. Amer. Math. Soc.}, 154(4):1701--1714, 2026.

\bibitem{MN}
S.~Makida and A.~Nakayasu.
\newblock Stability of metric viscosity solutions under Hausdorff convergence, 
\newblock to appear in {\em Anal. Geom. Metr. Spaces}.

\bibitem{Na1}
A.~Nakayasu.
\newblock Metric viscosity solutions for {H}amilton-{J}acobi equations of evolution type.
\newblock {\em Adv. Math. Sci. Appl.}, 24(2):333--351, 2014.

\bibitem{Na}
A.~Nakayasu.
\newblock Homogenization of Hamilton-Jacobi equations on the Sierpinski gasket.
\newblock Talk at NLPDE seminar, Kyoto University, October 5, 2021. 

\bibitem{NN}
A.~Nakayasu and T.~Namba.
\newblock Stability properties and large time behavior of viscosity solutions of Hamilton-Jacobi equations on metric spaces.
\newblock {\em Nonlinearity}, 31(11):5147--5161, 2018.

\bibitem{NeSu}
R.~T. Newcomb, II and J.~Su.
\newblock Eikonal equations with discontinuities.
\newblock {\em Diff. Integral Equ.}, 8(8):1947--1960, 1995.

\bibitem{So}
P.~Soravia. 
\newblock Boundary value problems for Hamilton-Jacobi equations with discontinuous Lagrangian. 
\newblock {\em Indiana Univ. Math. J.}, 51(2):451--477, 2002.

\bibitem{S}
A.~{\'S}wi{\polhk{e}}ch.
\newblock Risk-sensitive control and differential games in infinite dimensions.
\newblock {\em Nonlinear Anal.}, 50(4):509--522, 2002. 

\bibitem{TrBook}
H.~V.~Tran. 
\newblock {\it Hamilton-Jacobi equations---theory and applications}. 
\newblock Graduate Studies in Mathematics, 213, Amer. Math. Soc., Providence, RI, 2021.

\bibitem{TrYu}
H.~V.~Tran and Y.~Yu. 
\newblock Optimal convergence rate for periodic homogenization of convex Hamilton-Jacobi equations.
\newblock {\em Indiana Univ. Math. J.}, 74(3):555--573, 2025. 

\end{thebibliography}

\end{document}